\newtheorem{theorem}{Theorem}[section]
\newtheorem{lemma}[theorem]{Lemma}
\theoremstyle{definition}
\newtheorem{definition}[theorem]{Definition}
\theoremstyle{remark}
\numberwithin{equation}{section}
\newcommand{\BR}{\mathbb{R}}
\begin{document}

\title[Legendrian Cube Number]{An infinite family of Legendrian torus knots distinguished by cube number}

\author[B. McCarty]{Ben McCarty}

\address{Department of Mathematics, Louisiana State University \newline
\hspace*{.375in} Baton Rouge, LA 70817, USA} \email{\rm{benm@math.lsu.edu}}

\subjclass{}
\date{}

\begin{abstract}
For a knot $K$ the cube number is a knot invariant defined to be the smallest $n$ for which there is a cube diagram of size $n$ for $K$.  There is also a Legendrian version of this invariant called the \emph{Legendrian cube number}.  We will show that the Legendrian cube number distinguishes the Legendrian left hand torus knots with maximal Thurston-Bennequin number and maximal rotation number from the Legendrian left hand torus knots with maximal Thurston-Bennequin number and minimal rotation number.
\end{abstract}

\maketitle

\bigskip
\section{Introduction}
\bigskip

Cube diagrams are $3$-dimensional representations of knots or links (c.f. \cite{Adam}).  A cube diagram is a cubic lattice knot embedded in an $n \times n \times n$ cube in $\mathbb{R}^3$ where each projection to an axis plane ($x = 0$, $y = 0$, and $z = 0$) is a grid diagram.  The integer $n$ is the \emph{size} of the cube diagram and the \emph{cube number} of a knot, denoted $c(K)$, is the smallest $n$ for which there is a cube diagram for the knot of size $n$.  

In \cite{BaldMcCar} small examples of cube diagrams of knots were given up to $7$ crossings.  Some examples given were observed to be minimal but only those knots $K$ for which the cube number equaled the arc index, or $\alpha(K)$.  In general arc index is a lower bound for cube number.  Of interest are the cases where the inequality between arc index and cube number is strict.  In \cite{McCarty} this question was explored further.  It was shown that for $8$ of the first $12$ chiral knots in Rolfsen's knot table, cube number distinguishes betwen mirror images of knots.  

Let $K$ be a Legendrian knot.  Define the \emph{Legendrian cube number} (or cube number when the context is clear), $c_\ell(K)$, to be the minimum $n$ such that there is a cube diagram for $K$ of size $n$ that projects to a Legendrian front of $K$ (see details in Section \ref{section:legendrian}).  Perhaps surprisingly, the Legendrian cube number can distinguish between Legendrian knots of the same topogical type in some cases.  This fact is unexpected because there is no clear relationship as of yet between cube diagrams and Legendrian knots (cube diagrams do not naturally embed as Legendrian knots even when they have the same Legendrian knot projections).  In \cite{McCarty} it was proved that the Legendrian cube number distinguishes between two Legendrian $(5,2)$ torus knots.  In this paper we prove that this fact holds in general.  Specifically, we prove the following:

\bigskip
\noindent
{\bf Theorem 1} \emph{Let $p \geq 5$, $K_{min}$ be the left hand $(p,2)$-torus knot with maximal Thurston-Bennequin number and rotation number, $r(K_{min}) = 2 - p$ and $K_{max}$ the $(p,2)$-torus knot with maximal Thurston-Bennequin number and $r(K_{max}) = p - 2$.  Then the Legendrian cube number distinguishes between $K_{min}$ and $K_{max}$.}
\bigskip

\bigskip
\section{Definition of a cube diagram}

Let $n \in \mathbb{Z}^{+}$ and $\Gamma$ an $n \times n \times n$ cube, thought of as a $3$-dimensional Cartesian grid with integer-valued vertices.  A \textit{flat of $\Gamma$} is any cuboid (a right rectangular prism) with integer vertices in $\Gamma$ such that there are two orthogonal edges of length $n$ with the remaining orthogonal edge of length $1$.  A flat with an edge of length 1 that is parallel to the $x$-axis, $y$-axis, or $z$-axis is called an {\em $x$-flat}, {\em $y$-flat}, or {\em $z$-flat} respectively.  Note that the cube itself is canonically oriented by the standard orientation of $\BR^3$ (right hand orientation).

\begin{center}
\begin{figure}[h]
\centering
\includegraphics[scale=.3]{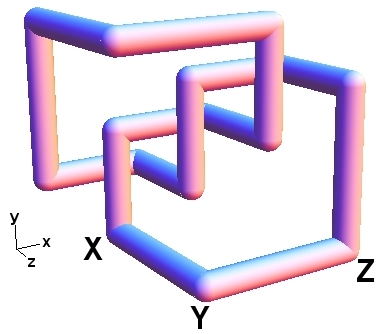}
\caption{Lefthand trefoil cube diagram.}
\label{fig:LHTrefoil}
\end{figure}
\end{center}

A \emph{marking} is a labeled half-integer point in $\Gamma$.  We mark unit cubes of $\Gamma$ with either an $X$, $Y$, or $Z$ such that the following {\em marking conditions} hold:
\begin{itemize}
    \item each flat has exactly one $X$, one $Y$, and one $Z$ marking;\\

    \item the markings in each flat form a right angle such that each segment is parallel to a coordinate axis;\\

    \item for each $x$-flat, $y$-flat, or $z$-flat, the marking that is the vertex of the right angle is an $X, Y,$ or $Z$ marking respectively.
\end{itemize}

We get an oriented link in $\Gamma$ by connecting pairs of markings with a line segment whenever two of their corresponding coordinates are the same.  Each line segment is oriented to go from an $X$ to a $Y$, from a $Y$ to a $Z$, or from a $Z$ to an $X$. The markings in each flat define two perpendicular segments of the link $L$ joined at a vertex, call the union of these segments a {\it cube bend}. If a cube bend is contained in an $x$-flat, we call it an {\it $x$-cube bend}. Similarly, define {\it $y$-cube bends} and {\it $z$-cube bends}.

\begin{center}
\begin{figure}[h]
\centering
\includegraphics[scale=.2]{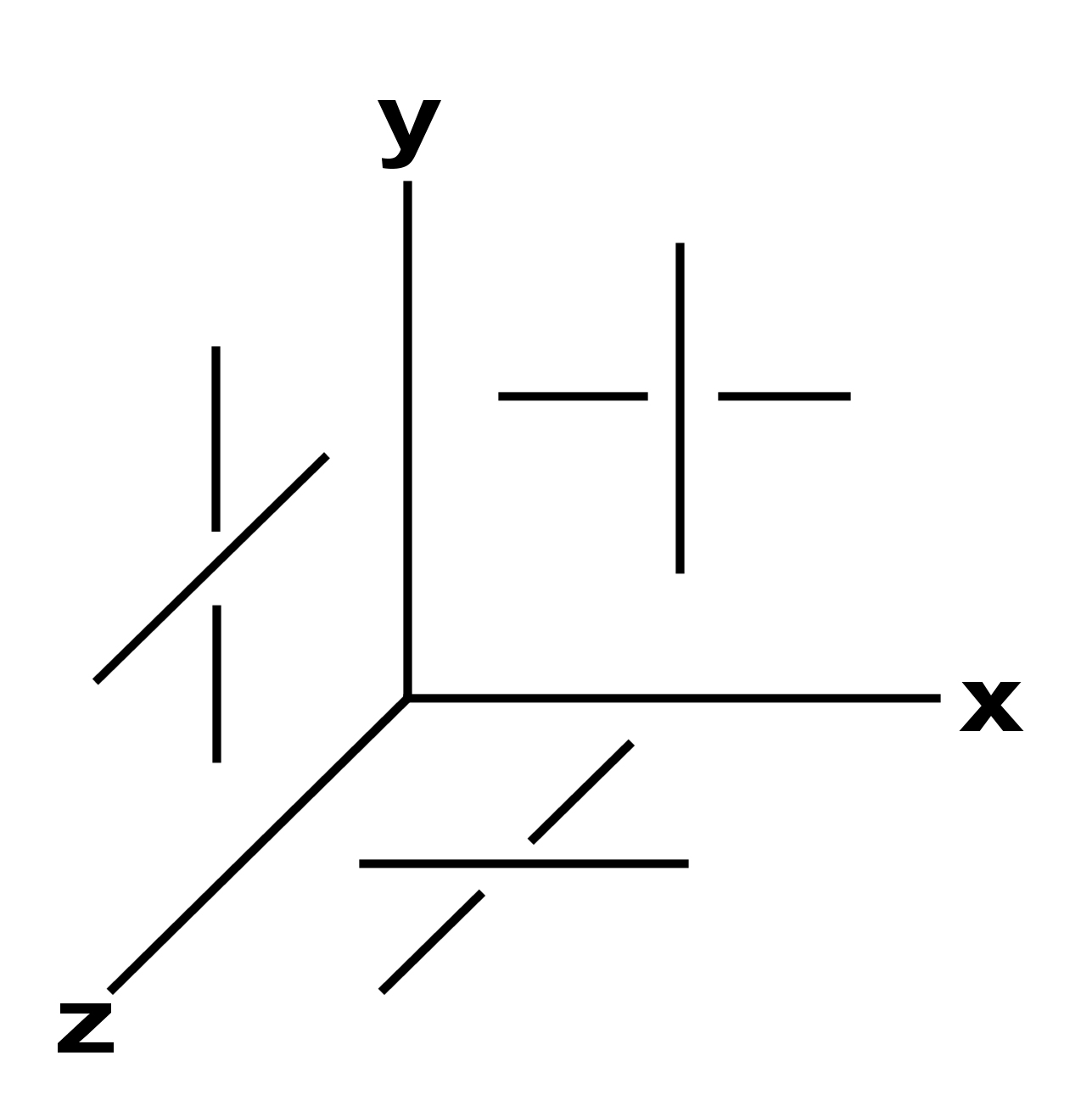}
\caption{Crossing conditions of the knot at every intersection in each projection.}
\label{fig:crossings}
\end{figure}
\end{center}

Arrange the markings in $\Gamma$ so that at every intersection point of the $(x,y)$-projection (i.e., $\pi_z : \mathbb{R}^3 \rightarrow \mathbb{R}^3$ given by $\pi_z(x,y,z)=(x,y)$), the segment parallel to the $x$-axis has smaller $z$-coordinate than the segment parallel to the $y$-axis.  Similarly, arrange so that in the $(y,z)$-projection, $z$-parallel segments cross over the $y$-parallel segments, and in the $(z,x)$-projection, the $x$-parallel segments cross over the $z$-parallel segments (see Figure \ref{fig:crossings}).

A set of markings in $\Gamma$ satisfying the marking conditions and crossing conditions is called a \emph{cube diagram} for the knot or link.  Note that the definition presented here differs from the one presented in \cite{Bald} by a shift of the markings:  change $Z$ to $Y$, $Y$ to $X$, and $X$ to $Z$.

\bigskip
\section{Liftability of grid diagrams}

Because cube diagrams project to grid diagrams, it is natural to think of a cube diagram as a lift of a grid diagram corresponding to, say, the $(x,y)$-projection of the cube.  However, such lifts do not always exist (c.f. \cite{Adam} and \cite{BaldMcCar}).  

Before proceeding, we need to establish some terminology and facts about grid diagrams (for more details see \cite{Adam}).  Grid diagrams will be oriented so that vertical edges are directed from $X$ to $O$.  A \emph{bend} in a grid diagram, $G$, is a pair of segments that meet at a common $X$ or $O$ marking.  We will refer to the former pair of segments as an $X$-bend and the latter as an $O$-bend.  There are two ways to decompose any link component in $G$ into a set of non-overlapping bends, corresponding to a choice of $X$-bends or $O$-bends.  In particular, for a knot there are only two ways to decompose $G$ into non-overlapping bends, and such a choice will be called a \emph{bend decomposition}. 

Consider a grid diagram, $G$, together with a choice of a bend decomposition.  If possible we wish to lift $G$ to a cube diagram where $G$ is the $(x,y)$-projection of the cube diagram and the bend decomposition of $G$ determines the $z$-cube bends of the cube diagram.  While $G$ carries with it an orientation on the knot, so does the $(x,y)$-projection of the cube diagram.  In order that these orientations agree, the $X$-bend decompositon of $G$ has to be chosen--$O$-bends cannot be lifted to $z$-cube bends.  Furthermore, because of the symmetry between all three projections in a cube diagram, it is enough to work only with the $(x,y)$-projection and lift $X$-bends to $z$-cube bends.  

The crossings in a grid diagram sometimes generate a \emph{partial order} on the $X$-bends.  Let $b_1$ and $b_2$ be two $X$-bends.  If $b_1$ crosses over $b_2$ in $G$ we say that $b_1 > b_2$.  Thus in any lift of $G$, the $z$-cube bend corresponding to $b_1$ must have $z$-coordinate greater than that of the $z$-cube bend corresponding to $b_2$.  

Of course, not every grid diagram has a partial order on the $X$-bends.  A grid diagram for which there is no partial order on the $X$-bends may not even lift to a lattice knot that has well-defined knot projections in the other planes (Figure 5 of \cite{Adam}).  However, if there is a partial ordering on the $X$-bends of the grid diagram, it will lift to a lattice knot in which all projections are well-defined knot projections (c.f. \cite{Adam}).  Nevertheless, even a partial order doesn't guarantee liftability to a cube diagram as the $(y,z)$- and $(z,x)$-projections may not be grid diagrams in such a lift (c.f \cite{Adam} and \cite{BaldMcCar}).  Below, we will introduce some grid configurations that fail to lift, not because of a lack of partial ordering but due to crossings in the $(y,z)$- or $(z,x)$-projections that do not satisfy the crossing conditions for a cube diagram.  In Figures \ref{fig:type1} and \ref{fig:type2}, the shaded regions are determined by the corresponding $X$-bend and extend from the $X$-bend to the boundary of the grid diagram as indicated.  Furthermore, a dotted edge represents a sequence of edges in the grid that remains in the shaded region.  This condition guarantees that at least one $z$-parallel edge will introduce a crossing in either the $(y,z)$ or $(z,x)$-projection which does not follow the crossing condition (c.f. \cite{McCarty}).

\begin{theorem}
\label{thm:type1}
The Type 1 and 2 configurations shown in Figures \ref{fig:type1} and \ref{fig:type2} do not appear in the projection of a cube diagram.
\end{theorem}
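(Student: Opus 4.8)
The plan is to translate the ``does not appear in a cube diagram'' statement into a concrete contradiction between the crossing conditions in two of the three axis projections. Suppose toward a contradiction that one of the Type~1 or Type~2 configurations occurs in, say, the $(x,y)$-projection of a cube diagram $D$. Since $D$ is a genuine cube diagram, all three projections $\pi_x, \pi_y, \pi_z$ are grid diagrams and every crossing in each of them satisfies the prescribed crossing condition (an $x$-parallel segment under a $y$-parallel segment in the $(x,y)$-projection, and cyclically for the others). The configuration singles out a particular $X$-bend $b$; the shaded region in Figures~\ref{fig:type1} and~\ref{fig:type2} is the rectangle cut out by the two legs of $b$ extended to the boundary of the grid. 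The combinatorial content of the figure is that the strand through the configuration must run into this shaded region and then exit it, and — because the dotted edge is required to stay inside the shaded region — it must do so using at least one $z$-parallel edge of the lattice knot.

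First I would make precise what it means for the relevant strand to ``stay in the shaded region'': this is exactly the hypothesis recalled from \cite{McCarty} just before the theorem, so I may take it as given that such a $z$-parallel edge $e$ exists. Next I would locate the crossing that $e$ is forced to create. Because $e$ is $z$-parallel, it is invisible in the $(x,y)$-projection but appears as an honest edge in both the $(y,z)$- and $(z,x)$-projections; the geometry of the shaded region (which side of $b$ the region lies on, and which boundary edge the dotted path abuts) forces $e$ to cross a specific $x$-parallel or $y$-parallel segment of $b$ in one of those two projections. I would then check the sign/over-under data: the position of $e$ relative to $b$ in the $(x,y)$-picture — i.e.\ whether the $z$-bend of $b$ lifts above or below the portion of the knot carrying $e$, which is pinned down by the partial-order data implicit in the configuration — determines whether $e$ passes over or under $b$'s leg in that projection, and a direct comparison shows this is the \emph{opposite} of what the cube-diagram crossing condition demands. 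That contradiction finishes the case.

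The proof is then a finite case analysis: Type~1 versus Type~2, and within each the handful of symmetric placements of the shaded region (there are essentially two orientations of the right angle of $b$, related by the cyclic symmetry of the three projections, so one can reduce to a single representative and invoke symmetry for the rest). For each representative I would draw the induced $(y,z)$- or $(z,x)$-projection near $b$, exhibit the offending crossing, and read off the violated inequality. The main obstacle I anticipate is bookkeeping rather than conceptual: keeping the orientation conventions straight across the three projections — the paper's markings are cyclically shifted relative to \cite{Bald}, and vertical edges in grid diagrams run $X \to O$ while cube-bend segments run $X \to Y \to Z \to X$ — so that the claimed over/under discrepancy is genuine and not an artifact of a sign convention. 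Once a single clean dictionary between the grid-diagram picture and the lifted lattice knot is fixed, each case reduces to inspecting one crossing, and the theorem follows.
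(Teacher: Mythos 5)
Your outline matches the paper's argument exactly: the paper itself gives no proof beyond the sentence preceding the theorem, which asserts that the dotted edge's confinement to the shaded region forces a $z$-parallel edge to create a crossing violating the crossing condition in the $(y,z)$- or $(z,x)$-projection, deferring the case-by-case verification to \cite{McCarty}. Your proposed contradiction via a forced bad crossing in one of the other two projections, followed by a finite case check over the configurations, is precisely that argument.
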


\begin{center}
\begin{figure}[h]
\centering
\includegraphics[scale=.3]{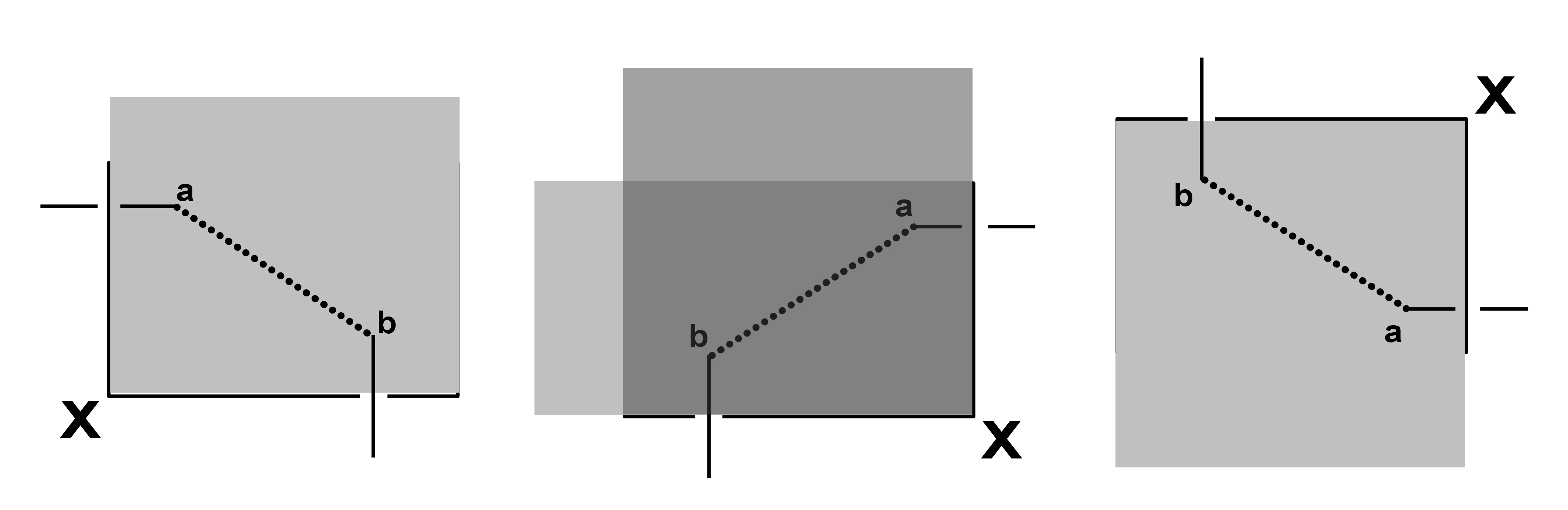}
\caption{Type 1 configurations.}
\label{fig:type1}
\end{figure}
\end{center}

\begin{center}
\begin{figure}[h]
\centering
\includegraphics[scale=.3]{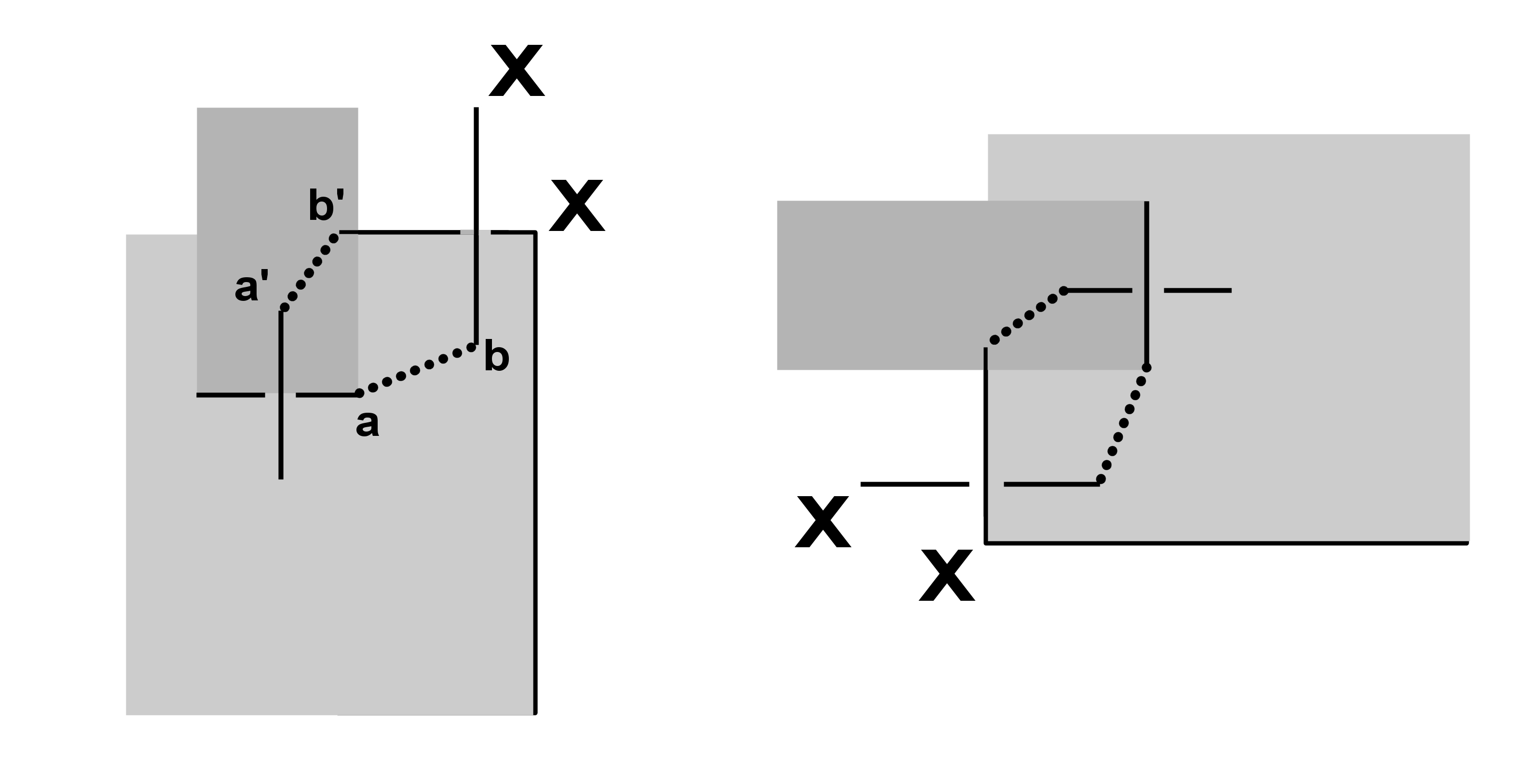}
\caption{Type 2 configurations.}
\label{fig:type2}
\end{figure}
\end{center}

\bigskip

\section{Cube number and Legendrian type}
\label{section:legendrian}

Any grid diagram represents the front projection of a Legendrian knot by following this procedure.  First smooth the northeast and southwest corners.  Then convert northwest and southeast corners to cusps and rotate the grid diagram $45$ degrees counterclockwise.  Alternatively, to obtain a Legendrian front projection for the mirror image of the knot represented by the given grid diagram, reverse all crossings, rotate the grid $45$ degrees clockwise, convert northeast and southwest corners to cusps and smooth the remaining corners.  While there is no similar construction to convert a cube diagram into a Legendrian knot, each of the projections of a cube diagram is a grid diagram, and hence, represents the Legendrian front projection of some knot.  Therefore one can define the Legendrian cube number, $c_\ell(K)$, to be the smallest $n$ such that there is a cube diagram for the knot $K$ of size $n$ where the $(x,y)$-projection of the cube diagram is a grid diagram representing the Legendrian knot $K$.  

It is not immediately obvious that the Legendrian cube number is defined for all Legendrian knots.  The construction given in \cite{Adam} shows how to lift any grid diagram (up to stabilizations) to a cubic lattice knot satisfying the marking conditions for a cube diagram.  The same construction may be done using only stabilizations of the grid that preserve the Legendrian type of the front projection represented by the grid.  Given a grid diagram $G$ representing a Legendrian knot $K$ one may perform Legendrian stabilizations and lift the diagram to a cubic lattice knot satisfying the marking conditions of a cube diagram, and the crossing conditions of the $(x,y)$-projection.  All that remains is to show that the crossing conditions of the other two projections may be corrected by a \emph{twisted crossing}.  Figures \ref{fig:twistedcrossing} and \ref{fig:TwistedxyProjection} show how to insert such a correction in the $(y,z)$-projection.  The construction for the $(z,x)$-projection is similar.  Note that Figure \ref{fig:twistedcrossing} is almost the same as the twisted crossing given in \cite{Adam} but has been modified slightly so that the stabilizations in the $(x,y)$-projection are Legendrian.  

\begin{center}
\begin{figure}[h]
\centering
\includegraphics[scale=.3]{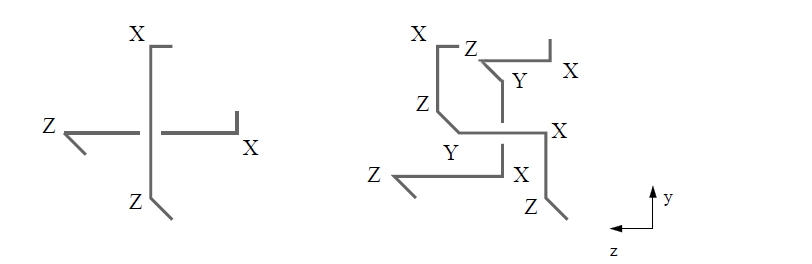}
\caption{Inserting a twisted crossing.}
\label{fig:twistedcrossing}
\end{figure}
\end{center}

\begin{center}
\begin{figure}[h]
\centering
\includegraphics[scale=.5]{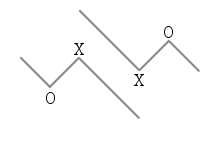}
\caption{The Legendrian stabilizations in the $(x,y)$-projection after the insertion of a twisted crossing.}
\label{fig:TwistedxyProjection}
\end{figure}
\end{center}

According to \cite{etnyrehonda} the \emph{rotation number} may be computed from the Legendrian front projection as follows:
$$r(K) = \frac{1}{2}(D_c - U_c)$$
where $D_c$ is the number of downward oriented cusps and $U_c$ is the number of upward oriented cusps in the Legendrian front projection.  Also, the \emph{Thurston-Bennequin number} of a Legendrian knot may be computed as follows:
$$tb(K) = \omega(K) - \frac{1}{2} ( D_c + U_c)$$
where $\omega(K)$ is the writhe of the front projection of $K$.  Furthermore, according to \cite{etnyrehonda} and \cite{Ng}, any minimal grid diagram for a left hand Legendrian torus knot, $T_{p,2}$, must realize the maximal Thurston Bennequin number and hence have rotation number satisfying:
$$r(K) \in \left\{ \pm (p - 2 - 4t): t \in \mathbb{Z}, 0 \leq t < \frac{p - 2}{2} \right\}.$$


\begin{center}
\begin{figure}[h]
\centering
\includegraphics[scale=.4]{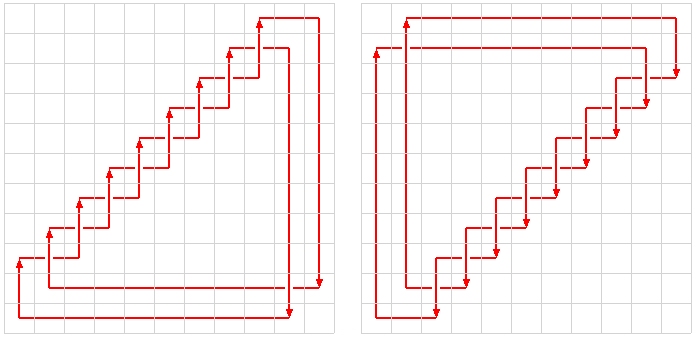}
\caption{Diagrams for the $(9,2)$ torus knot with $r = -7$ and $r = 7$ respectively.}
\label{fig:lhp2}
\end{figure}
\end{center}

\bigskip
\noindent
{\bf Theorem 1} \emph{Let $p \geq 5$, $K_{min}$ be the left hand $(p,2)$-torus knot with maximal Thurston-Bennequin number and rotation number, $r(K_{min}) = 2 - p$ and $K_{max}$ the $(p,2)$-torus knot with maximal Thurston-Bennequin number and $r(K_{max}) = p - 2$.  Then the Legendrian cube number distinguishes between $K_{min}$ and $K_{max}$.}
\bigskip

The proof of Theorem 1 will begin with a series of lemmas.  Given a front projection of a Legendrian knot with maximal Thurston-Bennequin number, we use Legendrian invariants to compute the number of maxima, minima and the number of downward and upward oriented cusps (Lemma \ref{bendtypes}).  We then get upper and lower bounds on what the writhe of the diagram can be (Lemma \ref{writhebound}).  Then, we show that $c_\ell(K_{max}) = \alpha(K) = p + 2$ for all $p$ (Lemma \ref{Kmax}).  Finally, we interpret what such a front would look like as a minimal grid diagram, and show that such a grid for $K_{min}$ will necessarily contain Type 1 configurations.  

Before proceeding we will define a partial order on the lattice points of a grid which will prove useful when thinking of grid diagrams as Legendrian front projections.  

\begin{definition} 
Given two points $P_1 = (x_1,y_1)$ and $P_2 = (x_2,y_2)$, we say that $P_1 \preceq P_2$ if and only if $x_1 \leq x_2$ and $y_1 \leq y_2$.  In this case we say that $P_1$ is \emph{below} $P_2$, or alternatively that $P_2$ is \emph{above} $P_1$ (see Figure \ref{fig:below}).
\end{definition}

Note that in Figure \ref{fig:below} $P_3$ is not comparable to to $P_1$.  Points that are comparable using this partial order may be connected by an arc that consists only of upward oriented cusps (thought of as coming from a grid diagram rotated to a Legendrian front).  Note that for a pair of points that are not comparable any path in the grid connecting them, will introduce a new local extremum.  

\begin{center}
\begin{figure}[h]
\centering
\includegraphics[scale=.3]{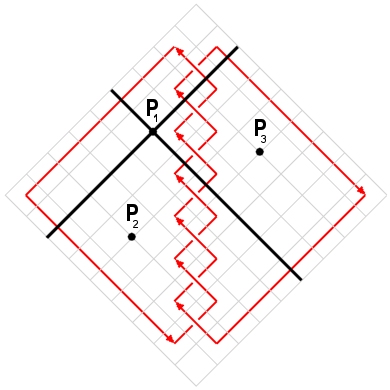}
\caption{$P_2$ is below $P_1$.}
\label{fig:below}
\end{figure}
\end{center}

Let $G$ be a minimal grid diagram for $K_{min}$.  Denote the number of northeast $X$-bends in $G$ by $X_{NE}$.  Similarly define $X_{SE}$, $X_{NW}$, $X_{SW}$, $O_{NE}$, $O_{SE}$, $O_{NW}$, and $O_{SW}$.  When converting $G$ to a left hand Legendrian front projection (i.e. a left hand torus knot) the number of downward oriented cusps will be $D_L = X_{NW} + O_{SE}$, and the number of upward oriented cusps will be $U_L = O_{NW} + X_{SE}$.  When converting $G$ to a right hand Legendrian front projection (i.e. a right hand torus knot) the number of downward oriented cusps will be $D_R = X_{NE} + O_{SW}$ and the number of upward oriented cusps will be $U_R = O_{NE} + X_{SW}$.

\begin{lemma}
\label{bendtypes}
For a grid diagram, $G$, representing a left hand $(p,2)$ torus knot, the number of bends is as follows:  $D_L = 2 + \omega + p$, $U_L = \omega + 3p - 2$, and $D_R = U_R = 2 - p - \omega$.  Furthermore, the number of maxima and minima in a Legendrian front corresponding to $G$ must be equal.
\end{lemma}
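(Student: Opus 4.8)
The plan is to set up a small system of linear equations relating the bend counts to the classical Legendrian invariants and solve it. First I would record the total count of each bend type. Since $G$ represents a knot, each of the two bend decompositions partitions the knot into the same number of bends, and in fact the total number of $X$-markings equals the total number of $O$-markings equals the grid size $n$; hence
\begin{equation*}
X_{NE} + X_{NW} + X_{SE} + X_{SW} = O_{NE} + O_{NW} + O_{SE} + O_{SW} = n.
\end{equation*}
Moreover, walking around the knot, every $X$-bend is followed and preceded by an $O$-bend, and matching up the four compass types (using that the orientation convention forces vertical edges to go from $X$ to $O$) gives the standard ``turning'' relations among the eight quantities; the essential consequence I need is that the number of local maxima equals the number of local minima in any closed front, which already yields the final sentence of the lemma once we know how maxima/minima correspond to cusps and smoothed corners under the grid-to-front conversion.

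Next I would feed in the Legendrian data. Using the formulas recalled from \cite{etnyrehonda}, for the left-hand front we have $r = \tfrac12(D_L - U_L)$ and $tb = \omega - \tfrac12(D_L + U_L)$, and similarly $r_R = \tfrac12(D_R - U_R)$, $tb_R = \omega_R - \tfrac12(D_R + U_R)$ for the right-hand front obtained by reversing all crossings (so $\omega_R = -\omega$). For the left-hand $(p,2)$ torus knot the maximal Thurston–Bennequin number is $tb = -2p$ (equivalently $2p-1-2p = -2p$ from the standard formula $\overline{tb}(T_{p,2}) = -2p$ in this chirality convention), and every minimal grid diagram realizes it by the result of \cite{etnyrehonda, Ng} quoted above; the mirror is the right-hand $(p,2)$ torus knot, whose maximal $tb$ is $2 - 2p$ and whose $r$ must then be $0$ by the sharpness of the Bennequin-type bound combined with the Kauffman polynomial estimate (this is exactly what forces $D_R = U_R$). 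Combining $D_L + U_L = 2\omega + 4p$ (from $tb = -2p$), $D_R + U_R = 2(p - 2 - \omega)$ (from $tb_R = 2 - 2p$ and $\omega_R = -\omega$), and $D_R = U_R$ gives $D_R = U_R = 2 - p - \omega$; then $D_L - U_L = 4 - 2p$ (this is where $r(K_{min}) = 2 - p$, the extreme rotation number allowed by the list $\{\pm(p-2-4t)\}$, enters), and solving the two-by-two system yields $D_L = 2 + \omega + p$ and $U_L = \omega + 3p - 2$.

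The main obstacle I expect is not the algebra but pinning down exactly which constraints are forced rather than merely consistent: in particular, justifying that a minimal grid for $K_{min}$ has right-hand rotation number $0$ (so that $D_R = U_R$), and that the appropriate extremal value of $r$ is being used on the left. The first point should follow from the classification of Legendrian $(p,2)$ torus knots — the mirror of a left-hand $(p,2)$ torus knot with maximal $tb$ is the right-hand $(p,2)$ torus knot with maximal $tb$, which is the Legendrian unknot-like ``maximal'' representative with $r = 0$ — together with the computation $tb_R + |r_R| \le 2 - 2p$ being sharp. The equality of maxima and minima I would argue purely combinatorially from the fact that the underlying curve is a closed loop in the plane, independent of all the torus-knot specifics, so it survives as a general statement. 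Once these inputs are in hand the rest is the linear solve above.
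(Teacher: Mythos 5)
Your computation of $D_L$ and $U_L$ matches the paper's: minimality forces maximal $tb=-2p$ (via Ng), $r=2-p$ gives $D_L-U_L=4-2p$, and the two cusp formulas yield the stated values. The equality $D_R=U_R$ is also obtained the same way, from the Etnyre--Honda fact that a maximal-$tb$ right-handed torus knot has rotation number $0$, and your closed-curve argument for ``maxima equal minima'' is the paper's Euler-characteristic observation. Where you diverge is in computing the common value of $D_R$ and $U_R$, and that step contains a genuine error: the maximal Thurston--Bennequin number of the right-handed $(p,2)$ torus knot is $pq-p-q=p-2$, not $2-2p$, and your intermediate identity $D_R+U_R=2(p-2-\omega)$ is consistent with neither your stated $tb_R$ (which would give $D_R+U_R=2(2p-2-\omega)$) nor your claimed conclusion --- combined with $D_R=U_R$ it yields $D_R=U_R=p-2-\omega$, not $2-p-\omega$. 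With the correct value $tb_R=p-2$ your route does close up, since then $\tfrac{1}{2}(D_R+U_R)=-\omega-(p-2)=2-p-\omega$; but as written the derivation does not establish the lemma.

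The paper sidesteps the mirror's $tb$ entirely: a minimal grid has $2(p+2)$ markings, each the vertex of exactly one bend, so $D_L+U_L+D_R+U_R=2(p+2)$, and subtracting the already-computed $D_L+U_L=2\omega+4p$ gives $D_R+U_R=4-2p-2\omega$ directly. That counting argument is more elementary and requires no knowledge of $\overline{tb}$ of the mirror; the two routes are equivalent precisely because $\overline{tb}(K)+\overline{tb}(mK)=-\alpha(K)$ is sharp for torus knots, which is the same sharpness you invoke. If you repair the value of $tb_R$ (or replace that step by the bend count), your proof is correct and essentially parallel to the paper's.
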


\begin{proof}
According to \cite{etnyrehonda} any Legendrian front projection for a right hand torus knot with maximal Thurston-Bennequin number has rotation number equal to $0$.  Hence, the number of downward oriented cusps equals the number of upward oriented cusps in the right hand Legendrian front obtained from $G$.  That is, $D_R = U_R$.  Furthermore, according to \cite{etnyrehonda} the maximal Thurston-Bennequin number of the left hand Legendrian front corresponding to $G$ is $-2p$.  Hence, for a minimal grid diagram (which must have maximal Thurston-Bennequin number according to \cite{Ng}) we have the following equation:
$$-2p = \omega(G) - \frac{1}{2}(D_L + U_L).$$
Also, since the minimal rotation number realizable in a minimal grid diagram is $2-p$ and the minimum rotation number must equal $\frac{1}{2}(D_L - U_L)$ we have the following:
$$D_L - U_L = 4-2p.$$
Solving for $D_L$ and $U_L$ we obtain:
$$D_L = 2 + \omega(G) + p$$
$$U_L = \omega(G) + 3p -2$$
where $\omega(G)$ is the writhe of the diagram.  Also, the total number of bends in a minimal grid $G$ of any type is $2(p+2) = D_R + U_R + D_L + U_L$.  Since $D_R = U_R$ we find:
$$D_R = U_R = 2 - p - \omega(G)$$

\begin{center}
\begin{figure}[h]
\centering
\includegraphics[scale=.3]{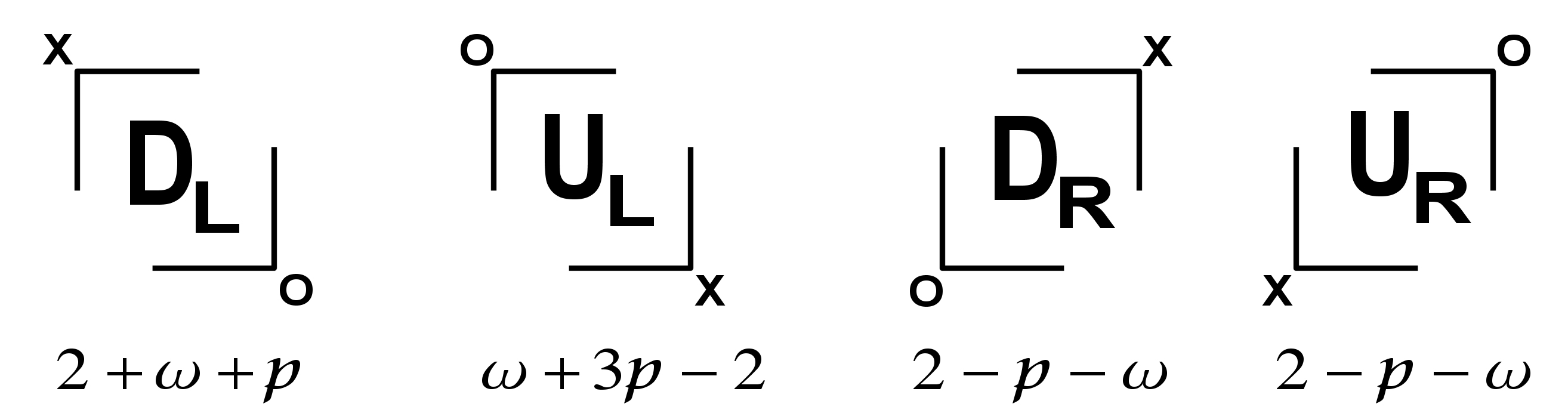}
\caption{The types of bends in $G$ with the number of each type that may occur where $\omega = \omega(G)$.}
\label{fig:bendtypes}
\end{figure}
\end{center}

For the last statement, since the Euler characteristic of $S^1$ is $0$ the number of index 1 critical points (relative maxima) and the number of index 0 critical points (relative minima) in a Legendrian front must be equal. 

\end{proof}

\begin{lemma}
\label{writhebound}
Given a grid diagram for $K_{min}$ we have the following bound on the writhe:  $-p - 2 \leq \omega(G) < 2 - p$.
\end{lemma}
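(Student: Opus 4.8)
The plan is to read off both inequalities from the bend count already computed in Lemma~\ref{bendtypes}, using only that each of the eight bend counts $X_{NE},\dots,O_{SW}$ is a nonnegative integer, together with one elementary observation about the rightmost column of the grid. Throughout, $G$ is a minimal grid diagram for $K_{min}$, so that Lemma~\ref{bendtypes} applies.

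For the lower bound, recall from Lemma~\ref{bendtypes} that the number of downward oriented cusps of the left hand front coming from $G$ is $D_L = 2 + \omega(G) + p$. Since $D_L = X_{NW} + O_{SE} \geq 0$, this gives $\omega(G) \geq -p-2$ immediately (and one checks that the remaining bend counts yield no stronger lower bound for $p \geq 2$, so nothing else is needed here).

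For the upper bound, Lemma~\ref{bendtypes} gives $D_R = U_R = 2 - p - \omega(G)$, so proving $\omega(G) < 2-p$ is the same as proving $D_R = U_R \geq 1$. I would argue this by inspecting the rightmost column of $G$: it contains exactly one $X$ marking, in some row $a$, and one $O$ marking, in some row $b \neq a$, and the vertical edge of the knot joining them is oriented from the $X$ to the $O$. Look at whichever of the two markings lies in the higher row. Its vertical edge leaves it going downward, and its horizontal edge must leave it going left, since the marking sharing its row is an $X$–$O$ partner forced into a strictly earlier column (the only $X$ and $O$ in the last column sit in rows $a$ and $b$). Hence the higher marking is a southwest-type bend, so $X_{SW} + O_{SW} \geq 1$, and therefore $2D_R = D_R + U_R = X_{NE} + O_{SW} + O_{NE} + X_{SW} \geq 1$; as $D_R$ is a nonnegative integer this forces $D_R \geq 1$, whence $\omega(G) = 2 - p - D_R \leq 1 - p < 2-p$.

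There is no deep difficulty anticipated: everything reduces to the bookkeeping of Lemma~\ref{bendtypes} plus the corner observation. The one point that requires care is the orientation convention — that vertical edges run from $X$ to $O$ — since this is precisely what forces the topmost marking of the last column to open toward the \emph{southwest} rather than the northwest, and hence to be counted in $D_R$ or $U_R$ rather than in $D_L$ or $U_L$.
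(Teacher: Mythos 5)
Your proof is correct, and the lower bound is argued exactly as in the paper: $D_L \geq 0$ together with $D_L = 2 + \omega(G) + p$ from Lemma~\ref{bendtypes}. For the upper bound the paper takes a shorter, topological route: since any knot diagram must contain at least one relative maximum and one relative minimum, $D_R > 0$, and $D_R = 2 - p - \omega(G)$ does the rest. You instead prove $D_R \geq 1$ by a purely combinatorial inspection of the rightmost column of the grid, observing that the higher of its two markings must have its horizontal edge going left and its vertical edge going down, hence is one of the four ``smoothed'' corner types counted by $D_R + U_R$; combined with $D_R = U_R$ and integrality this gives $D_R \geq 1$. Both arguments are valid; yours is more self-contained (it never appeals to the Morse-theoretic fact about closed curves), at the cost of a little bookkeeping. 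Two small imprecisions worth fixing: the phrase ``its vertical edge leaves it going downward'' conflates geometry with orientation (if the higher marking is the $O$, the vertical edge is oriented \emph{into} it), and under the corner-naming convention in which NE/SW corners are the smoothed ones, the topmost marking of the last column is a \emph{northeast}-type corner rather than a southwest one. Neither affects your conclusion, since you ultimately bound the full sum $X_{NE} + O_{NE} + X_{SW} + O_{SW} = D_R + U_R$ from below.
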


\begin{proof}
Since any knot diagram must contain relative maxima and minima, $D_R > 0$ and hence by Lemma \ref{bendtypes}, $\omega(G) < 2 - p$.  Also, since $D_L \geq 0$, Lemma \ref{bendtypes} implies that $\omega(G) \geq -p - 2$.
\end{proof}

\begin{lemma}
\label{Kmax}
$c_\ell(K_{max}) = \alpha(T_{p,2}) = p + 2$.
\end{lemma}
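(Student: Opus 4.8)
The plan is to prove the two inequalities $c_\ell(K_{max}) \ge p+2$ and $c_\ell(K_{max}) \le p+2$ separately; together with the classical value $\alpha(T_{p,2}) = p+2$ of the arc index of the $(p,2)$-torus knot, this gives the lemma. The lower bound is immediate from general principles: a cube diagram whose $(x,y)$-projection is a Legendrian front for $K_{max}$ is, in particular, a cube diagram for the underlying topological knot $T_{p,2}$, so $c_\ell(K_{max}) \ge c(T_{p,2})$, and since arc index bounds cube number from below (see the Introduction) we get $c(T_{p,2}) \ge \alpha(T_{p,2}) = p+2$.

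For the upper bound the plan is to exhibit one explicit cube diagram of size $p+2$ whose $(x,y)$-projection is a grid diagram $G$ representing $K_{max}$. First I would write down $G$: the minimal ($\alpha = p+2$) grid diagram for $T_{p,2}$ coming from the standard two-strand torus picture, arranged as a ``staircase'' so that every crossing of $G$ occurs between the two strands and so that its $X$- and $O$-bends are exactly of the types forced by (the $r = p-2$ analogue of) Lemma~\ref{bendtypes}, where the counts of $D_L$ and $U_L$ are interchanged relative to $K_{min}$. Tallying the $NE$, $NW$, $SE$, $SW$ bends of $G$ and feeding them into the cusp and writhe formulas of Section~\ref{section:legendrian} then verifies that the Legendrian front obtained from $G$ has Thurston--Bennequin number $-2p$ and rotation number $p-2$, hence represents $K_{max}$. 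Second, I would check that $G$ lifts to a genuine cube diagram of the same size: because every crossing of $G$ lies between the two strands of the torus knot, the over/under relation on the $X$-bends of $G$ is acyclic and so generates a partial order, whence (by the liftability discussion of Section~3) $G$ lifts to a lattice knot all three of whose coordinate projections are knot projections; it then remains to assign $z$-coordinates to the $z$-cube bends compatibly with that order and to check directly that the resulting $(y,z)$- and $(z,x)$-projections satisfy the crossing conditions of a cube diagram --- equivalently, by Theorem~\ref{thm:type1}, that $G$ contains no Type~1 or Type~2 configuration. This produces a cube diagram of size $p+2$ for $K_{max}$ whose $(x,y)$-projection is $G$, giving $c_\ell(K_{max}) \le p+2$.

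The main obstacle is precisely this last verification. A \emph{generic} minimal grid diagram for $T_{p,2}$ will not lift to a cube, so the whole content of the lemma lies in choosing the staircase $G$ so that every $z$-parallel edge introduced by the lift produces a correctly-signed crossing in each side projection. I expect the cleanest route to be a uniform-in-$p$ description of $G$ as a periodic pattern built from a small number of block types, followed by a direct check (or a short induction on $p$ that adds one twist-block at a time) that no block ever creates a Type~1 or Type~2 configuration and that the block heights can always be chosen consistently. The explicit $(5,2)$ cube diagram constructed in \cite{McCarty} should serve both as the base case and as the template for the general pattern.
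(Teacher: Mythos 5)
Your proposal is correct and matches the paper's approach: the paper's entire proof is ``extend the construction shown in Figure \ref{fig:KMax} in the obvious way,'' i.e.\ an explicit size-$(p+2)$ cube diagram built by repeating a twist-block pattern from the $p=5$ template, with the lower bound left implicit from the arc-index inequality stated in the introduction. Your write-up is in fact more careful than the paper's about what the ``obvious'' extension must verify (partial order, absence of Type 1 and 2 configurations), but it is the same argument.
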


\begin{proof}
Extend the construction shown in Figure \ref{fig:KMax} in the obvious way.
\end{proof}

\begin{center}
\begin{figure}[h]
\centering
\includegraphics[scale=.3]{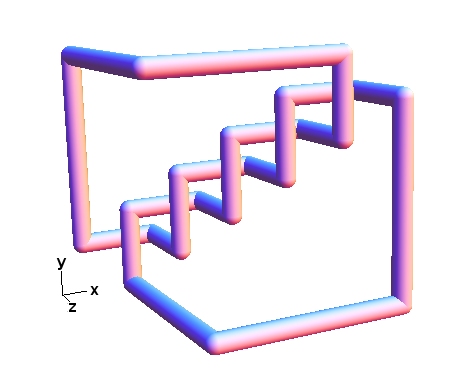}
\caption{A cube diagram for $K_{max}$ when $p = 5$.}
\label{fig:KMax}
\end{figure}
\end{center}

\begin{proof}[Proof of Theorem 1]

The remainder of the proof breaks down into four cases based on the value of $\omega(G)$ (c.f. Lemma \ref{writhebound}).

\bigskip

\noindent {\bf Case 1:  $\omega(G) = 1 - p$.}  

By Lemma \ref{bendtypes}, $D_L = 3$, $U_L = 2p - 1$ and $D_R = U_R = 1$.  After converting $G$ to a Legendrian front projection we obtain a knot diagram with exactly one maximum and one minimum.  Since $T_{p,2}$ is a two-bridge knot (c.f. \cite{Schultens}), any diagram must contain at least 2 maxima and minima, thus we obtain a contradiction.

\bigskip

\noindent {\bf Case 2:  $\omega(G) = -p$.}  

By Lemma \ref{bendtypes}, $D_L = 2$, $U_L = 2p - 2$, and $D_R = U_R = 2$.  A Legendrian front for $K_{min}$ has exactly $2$ relative maxima and $2$ relative minima.  There are three cases to consider: 

\medskip

\begin{enumerate}
  \item both relative maxima (and both relative minima) are marked by an $X$,
  \item both relative maxima (and both relative minima) are marked by an $O$,
  \item one relative maximum (respectively minimum) is marked by an $O$ and one relative maximum (respectively minimum) is marked by an $X$
\end{enumerate}

\begin{center}
\begin{figure}[h]
\centering
\includegraphics[scale=.7]{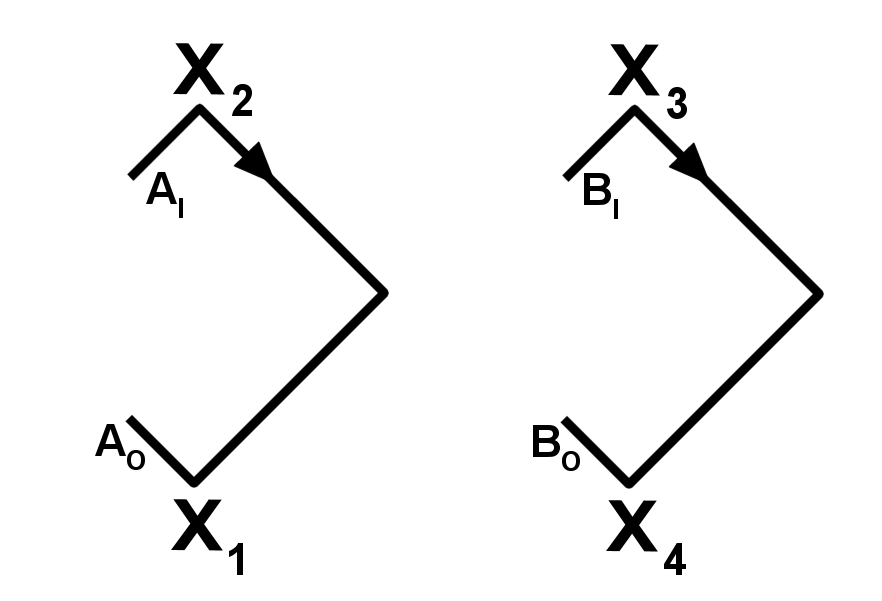}
\caption{There are two possible ways to connect the upward oriented arcs.}
\label{fig:2capsconnections}
\end{figure}
\end{center}

Note that for Case 1 (and by symmetry Case 2) the labels on the maxima and minima must all be the same, lest we have too many bends of type $D_R$ or $U_R$.  Therefore, each maximum must connect along a downward oriented arc to a minimum via a single downward oriented cusp (c.f. Figure \ref{fig:2capsconnections}).  There are two possibilities for how to connect the upward oriented arcs. Denote a connection between two endpoints with a colon.

\begin{enumerate}
\item $A_O : A_I$ and $B_O : B_I$.
\item $A_O : B_I$ and $B_O : A_I$.
\end{enumerate}

Since the first possibility creates two components, we do not consider it.  For the second case, since there are only $2$ downward oriented cusps the downward oriented arcs may either cross once, or not at all depending on how the two configurations shown in Figure \ref{fig:2capsconnections} are situated.  Note that since $A_O$ connects to $B_I$, $X_1 \preceq X_3$ and since $B_O$ connects to $A_I$, $X_4 \preceq X_2$ (see Figure \ref{fig:2capscomparison}).  Either $X_3$ will be comparable to $X_2$ or not.  If $X_3$ is not comparable to $X_2$ then we have one of the four cases shown in Figure \ref{fig:2capscomparison2} corresponding to the position of $X_1$ relative to $X_4$.  We dispense with these four cases by observing that in each case, there are not enough upward oriented bends to produce $K_{min}$.  We may then assume, without loss of generality, that $X_2 \preceq X_3$.  Then, we may also assume that $X_4 \preceq X_1$, following the same line of reasoning that we used to show that $X_2$ and $X_3$ must be comparable.  Since $X_2 \preceq X_3$ and $X_4 \preceq X_1$ the downward oriented arcs must cross once as shown in Figure \ref{fig:2capsXX2}.

\begin{center}
\begin{figure}[h]
\centering
\includegraphics[scale=.7]{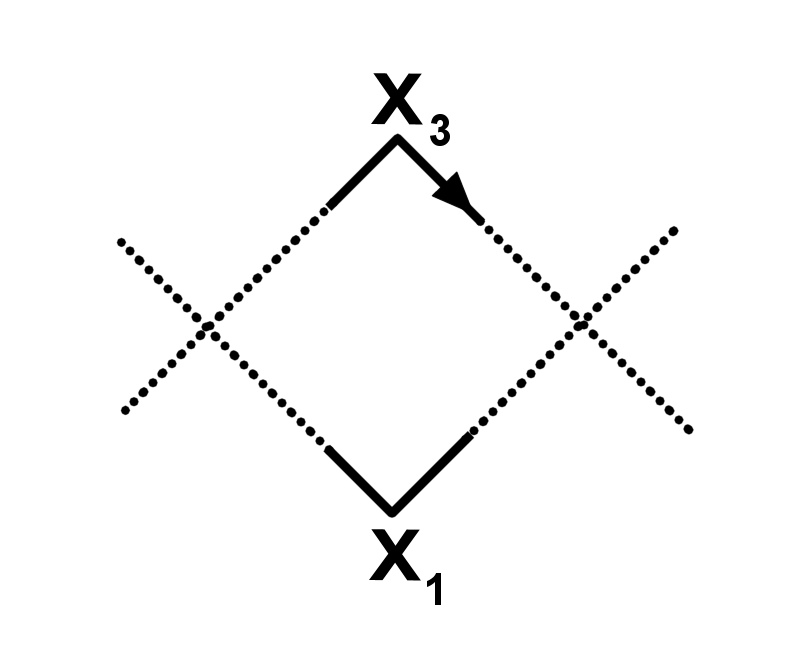}
\caption{$X_1 \preceq X_3$.}
\label{fig:2capscomparison}
\end{figure}
\end{center}

\begin{center}
\begin{figure}[h]
\centering
\includegraphics[scale=.7]{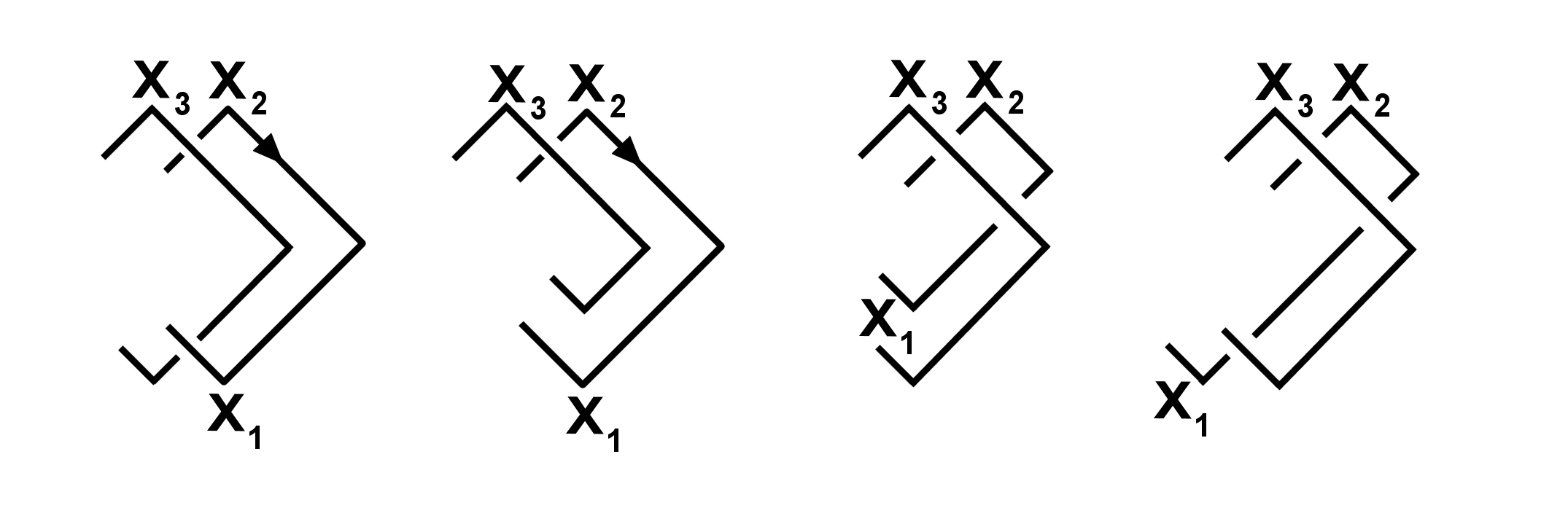}
\caption{Two cases where $X_3$ is not comparable to $X_2$.}
\label{fig:2capscomparison2}
\end{figure}
\end{center}

Thus, the upward oriented twisting arcs must complete at least $p - 1$ half twists in order to construct $K_{min}$, requiring all $2p - 2$ upward oriented cusps.  Since $p \geq 5$ there must be at least four half-twists which will necessarily create Type 1 configurations.  See Figure \ref{fig:2capsXX2} for an example of a Legendrian front for $K_{min}$.

\begin{center}
\begin{figure}[h]
\centering
\includegraphics[scale=.7]{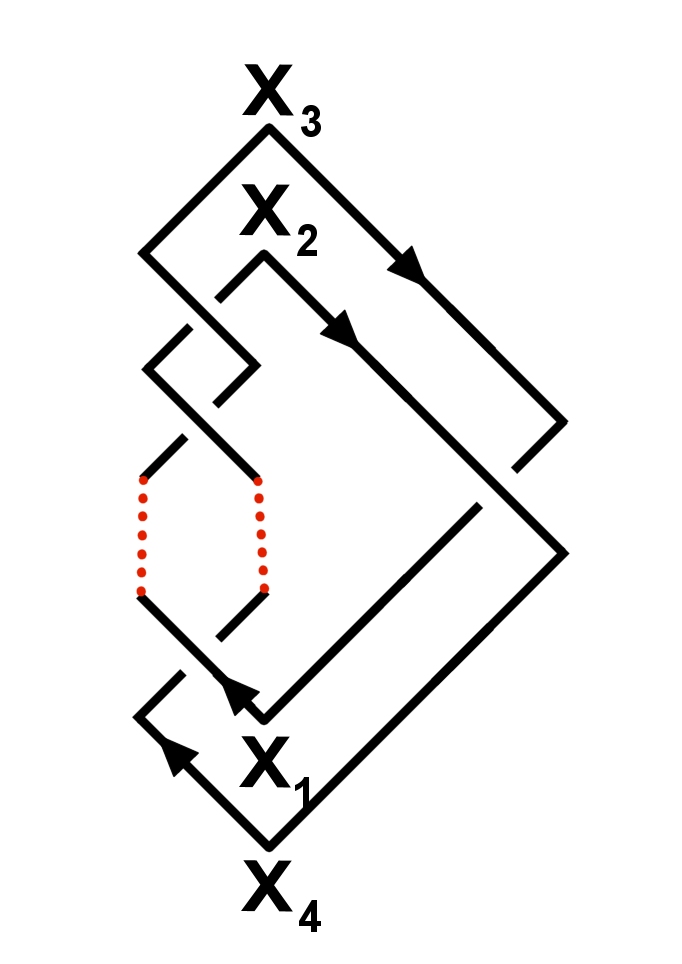}
\caption{One possibility for diagrams with $2$ relative maxima, both labeled with $X$.}
\label{fig:2capsXX2}
\end{figure}
\end{center}


For the third case, when one maximum is labeled with an $X$ and one is labeled with an $O$, we again consider whether the marked points are comparable or not.  If the maxima are not comparable, and the minima are not comparable, then the diagram will match either Figure \ref{fig:2capsXOconnections} or \ref{fig:2capsXOconnections2}.  For the diagram shown in Figure \ref{fig:2capsXOconnections} the downward oriented arcs must be positioned relative to each other such that the dotted lines cross below $X_2$ and $O_2$ lest the upward oriented arcs require the addition of a maximum to connect with $A_I$ and $B_I$ (see Figure \ref{fig:2capsDownFullTwist}).  In such a case, there are not enough upward oriented bends to create $K_{min}$.  

\begin{center}
\begin{figure}[h]
\centering
\includegraphics[scale=.7]{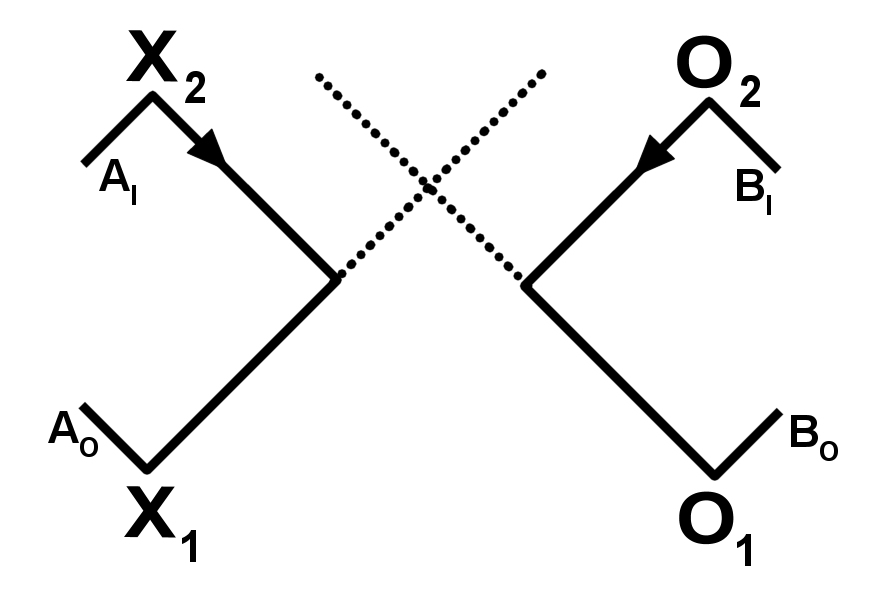}
\caption{Crossings of upward arcs must occur in the region above the dotted lines.}
\label{fig:2capsXOconnections}
\end{figure}
\end{center}

For the diagram shown in Figure \ref{fig:2capsXOconnections2}, the maxima must be above the crossing of the dotted lines, in order for the upward oriented arcs to connect up with $A_I$ and $B_I$.  In this case, the twisting of the upward arcs requires all $2p-2$ bends available (see Figure \ref{fig:2capsXO3}) and since $p \geq 5$ introduces Type 1 configurations.

\begin{center}
\begin{figure}[h]
\centering
\includegraphics[scale=.7]{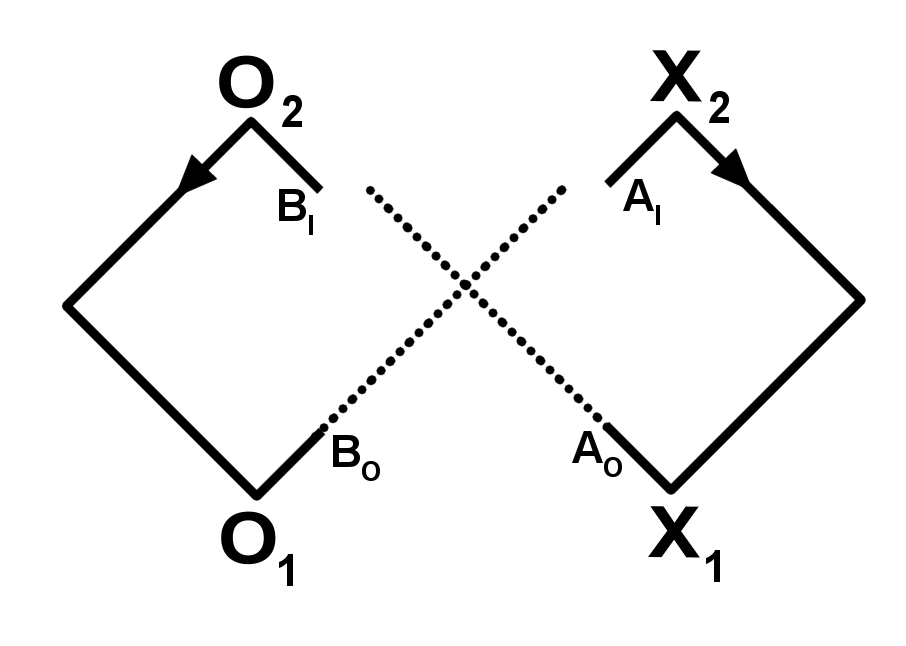}
\caption{Crossings of upward arcs must occur in the region above the dotted lines.}
\label{fig:2capsXOconnections2}
\end{figure}
\end{center}

\begin{center}
\begin{figure}[h]
\centering
\includegraphics[scale=.7]{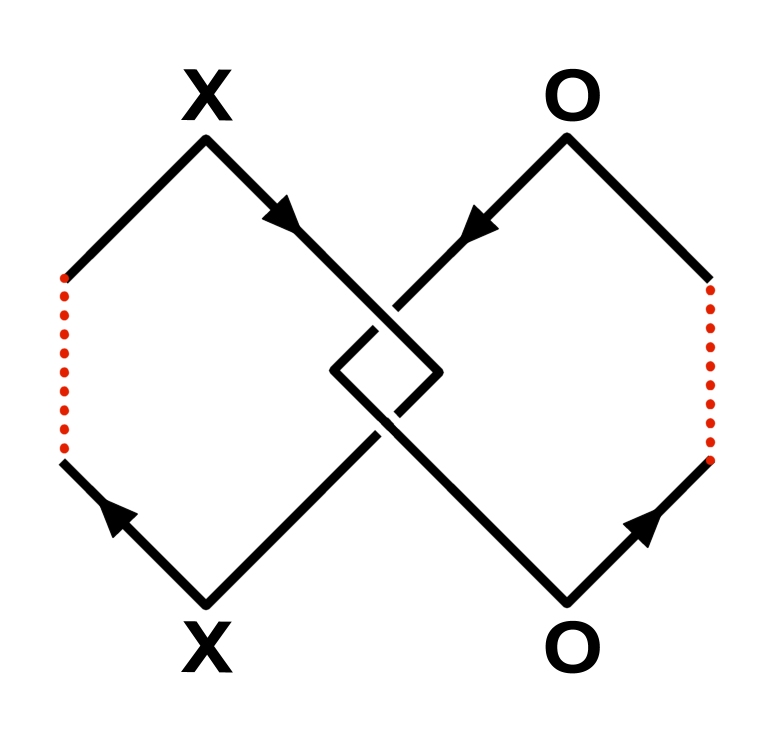}
\caption{Downward oriented strands completing one full twist.}
\label{fig:2capsDownFullTwist}
\end{figure}
\end{center}

If both downward oriented cusps lie on the same arc (as in Figure \ref{fig:2capsXO2}), then the twisting of the upward oriented arcs must occur above the crossing of the downward oriented arcs.  In this case, at least one of the arcs will require the addition a maximum to connect with a relative maximum.

\begin{center}
\begin{figure}[h]
\centering
\includegraphics[scale=.7]{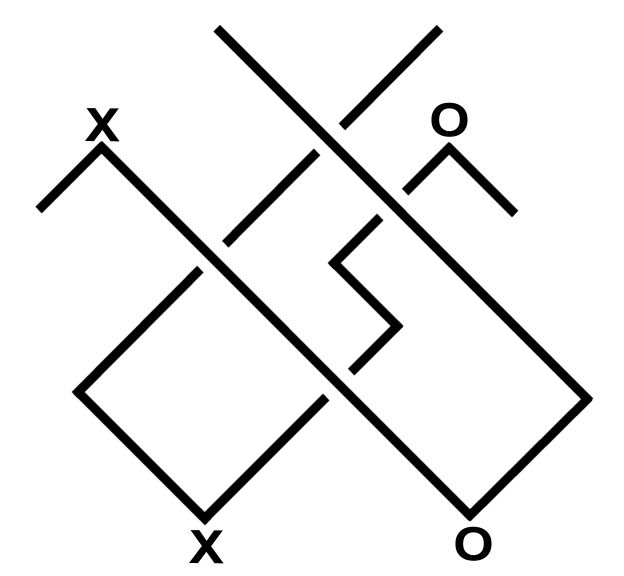}
\caption{One crossing on downward arcs.}
\label{fig:2capsXO2}
\end{figure}
\end{center}

\begin{center}
\begin{figure}[h]
\centering
\includegraphics[scale=.7]{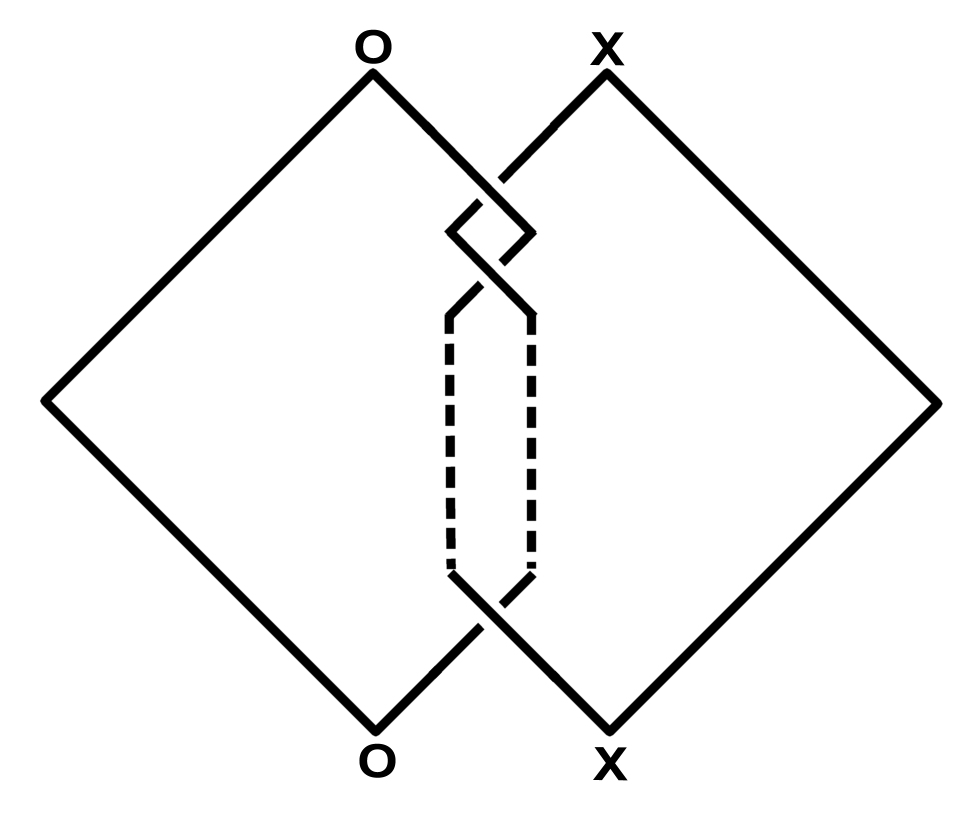}
\caption{Downward arcs with no crossing.}
\label{fig:2capsXO3}
\end{figure}
\end{center}

\bigskip
\noindent {\bf Case 3:  $\omega(G) = -p - 1$.}

In this case $D_L = 1$, $U_L = 2p - 3$, and $U_R = D_R = 3$.  Because the Legendrian front contains a single downward oriented cusp, two of the relative maxima must connect to two relative minima by a single edge each, while the third relative maximum connects to the third relative minimum via a single downward oriented cusp (both cases shown in Figure \ref{fig:3capsconnections}).  For the two pair of extrema connected by a single edge, if the two maxima are labeled with an $X$, then the two minima must be labeled with an $O$, and hence there would be $4$ bends of type $D_R$.  Thus, for these extrema there is one $X$ maximum and one $O$ maximum as shown in Figure \ref{fig:3capsconnections}.  For the upward oriented arcs there are three possibilities for how to connect the labelled endpoints in Figure \ref{fig:3capsconnections}.  Denoting a connection between two endpoints with a colon, the possibilities are:

\begin{enumerate}
 \item $B_O : A_I$, $C_O : B_I$, $A_O : C_I$,
 \item $C_O : A_I$, $A_O : B_I$, $B_O : C_I$,
 \item $B_O : A_I$, $A_O : B_I$, $C_O : C_I$.
\end{enumerate}

The first two possibilities lead to a single component, while the third produces more than one component.  The following is for the diagrams shown on the left in Figures \ref{fig:3capsconnections} and \ref{fig:3capsconnections2}.  The argument for the diagrams shown on the right in Figures \ref{fig:3capsconnections} and \ref{fig:3capsconnections2} is similar.

\begin{center}
\begin{figure}[h]
\centering
\includegraphics[scale=.7]{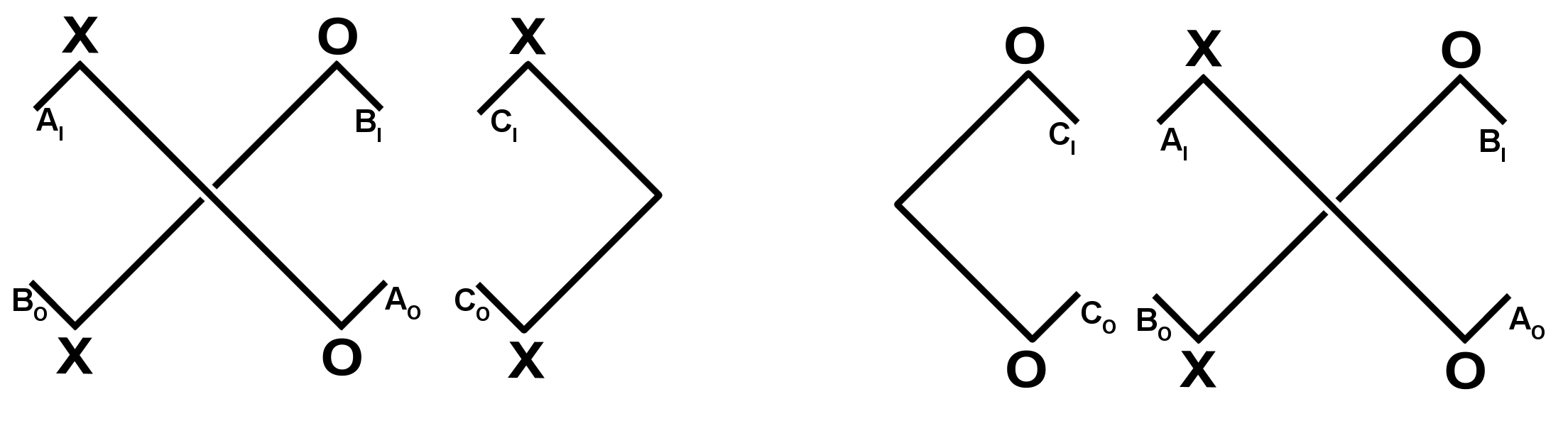}
\caption{$\omega(G) = -p - 1$.}
\label{fig:3capsconnections}
\end{figure}
\end{center}

For case $1$ refer to the arc connecting $A_O$ to $C_I$ by $\alpha$, the arc connecting $C_O$ to $B_I$ by $\beta$ and the arc connecting $B_O$ to $A_I$ by $\gamma$.  To form $K_{min}$ two of the upward oriented arcs must twist.  The $\beta$ and $\gamma$ arcs cannot twist since the entire $\beta$ arc must lie below the $O$ maximum and in order for the $\gamma$ arc to enter the region below the $O$ maximum, it would have to contain an additional relative maximum.  For similar reasons the $\alpha$ and $\gamma$ arcs cannot twist either.  Therefore any twisting that occurs must occur between the $\alpha$ and $\beta$ arcs.  The twisting of $\alpha$ and $\beta$ also means that $B_I$ must lie above $A_O$ and $A_I$ must lie above $B_O$, meaning that the downward oriented arcs connected to these ends must cross as shown in Figure \ref{fig:3capsconnections}.  In order to construct $K_{min}$, the $\alpha$ and $\beta$ arcs must twist $p$ times requiring $2p - 2$ bends.  Since there are only $2p - 3$ available in a minimal diagram such a diagram of $K_{min}$ cannot be minimal.

For case $2$ refer to the arc connecting $A_O$ to $B_I$ by $\alpha$, the arc connecting $C_O$ to $A_I$ by $\beta$ and the arc connecting $B_O$ to $C_I$ by $\gamma$.  Reasoning as before, we find that the $\beta$ and $\gamma$ arcs must twist.  In addition, the endpoint labeled $A_I$ must be above the endpoint labeled $B_O$ and the endpoint labeled $B_I$ must be above the endpoint labeled $A_O$.  Furthermore, one may construct $K_{min}$ so that the $\beta$ and $\gamma$ arcs complete $p - 2$ half-twists as shown in Figure \ref{fig:3capsExample}.  The upward arc connecting the two $O$ markings requires at least one cusp, while the twisting of $\beta$ and $\gamma$ requires $2p - 4$ bends, thus using all available upward oriented bends.  Since $p \geq 5$ the twisting of $\beta$ and $\gamma$ requires at least three half-twists, and hence, contains a Type 1 configuration.  A similar argument to that given for the configurations shown in Figure \ref{fig:2capscomparison2} will show that indeed the $X$ extrema must be nested as shown in in the top left diagram of Figure \ref{fig:3capsconnections2}.  The construction described above requires that the downward oriented cusp be placed between the relative maximum and minimum labeled with $O$-markings, leading to a twist as shown on the righthand side of Figure \ref{fig:3capsExample}.  Indeed, there are other possibilities for how this third downward arc (connecting a relative maximum and minimum labeled with $X$-markings via a single cusp) is placed in the diagram relative to the other maxima and minima.  However, if it is not placed as shown in Figure \ref{fig:3capsExample}, it will not produce a minimal diagram for $T_{(p,2)}$.

\begin{center}
\begin{figure}[h]
\centering
\includegraphics[scale=.7]{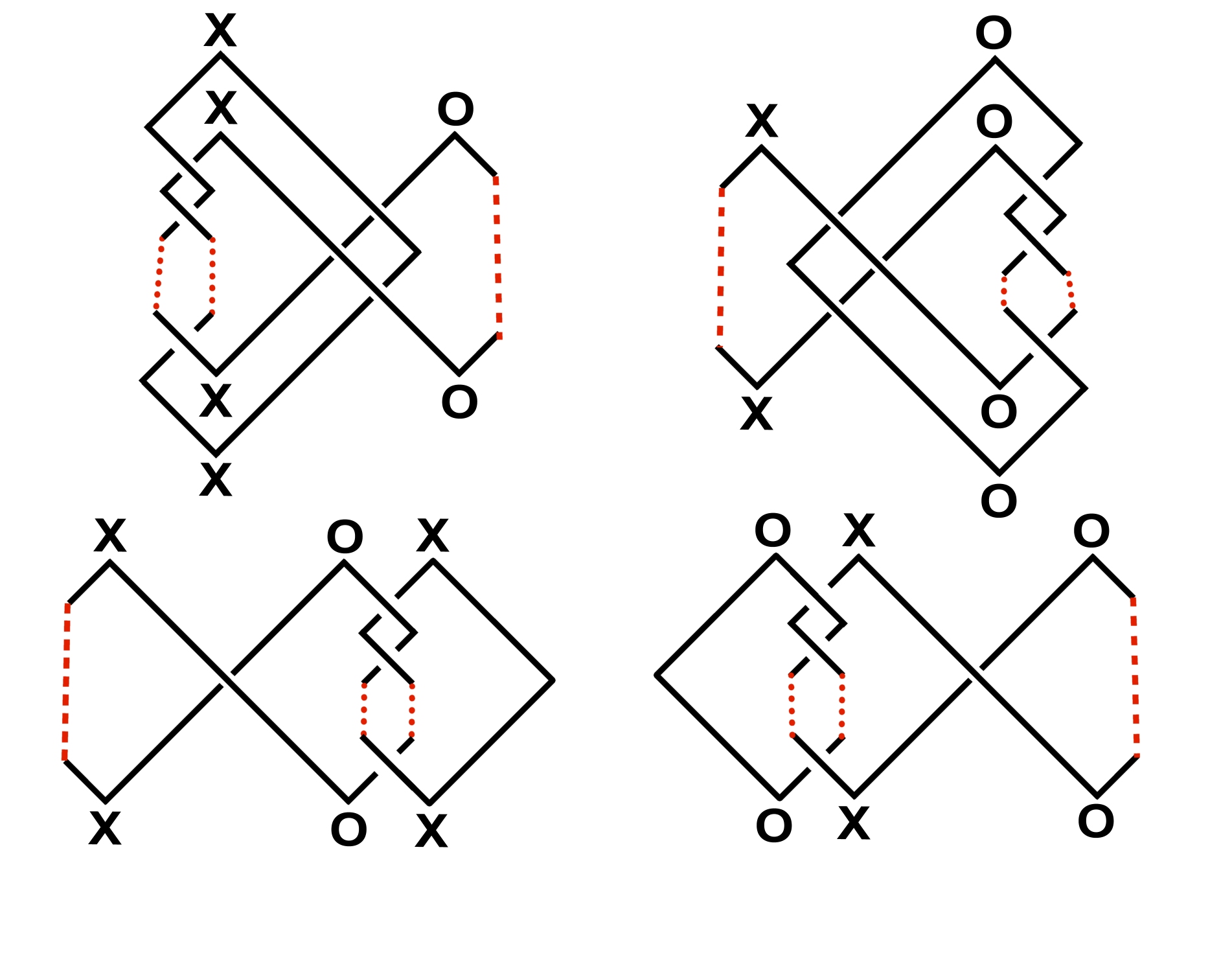}
\caption{$\omega(G) = -p - 1$.}
\label{fig:3capsconnections2}
\end{figure}
\end{center}

\begin{center}
\begin{figure}[h]
\centering
\includegraphics[scale=.4]{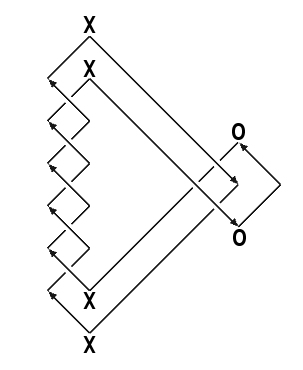}
\caption{$\omega(G) = -p - 1$ for $p = 7$.}
\label{fig:3capsExample}
\end{figure}
\end{center}

\begin{center}
\begin{figure}[h]
\centering
\includegraphics[scale=.8]{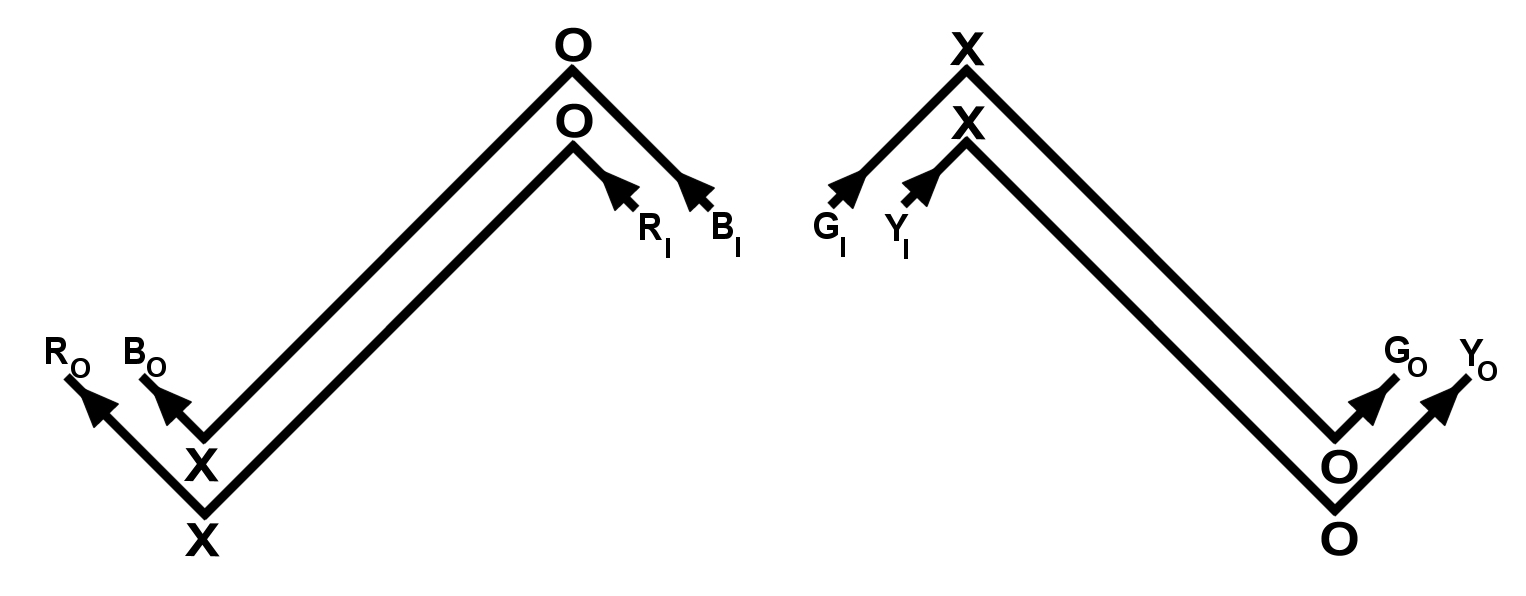}
\caption{$\omega(G) = -p - 2$.}
\label{fig:4capsbase}
\end{figure}
\end{center}

\bigskip
\noindent {\bf Case 4:  $\omega(G) = -p - 2$.}

In this case $D_L = 0$, $U_L = 2p - 4$, and $D_R = U_R = 4$.  For such a Legendrian front projection, there will be $4$ relative maxima and $4$ relative minima.  Since there are no downward oriented cusps, each relative maximum must connect to a relative minimum by a single edge.  Therefore, for each relative maximum marked with an $X$ there must be a corresponding relative minimum marked with an $O$.  Since there are $4$ bends each of types $D_R$ and $U_R$ there must be two relative maxima marked with an $X$ and two marked with an $O$ (the same is true for relative minima).  Since the outgoing edges (those with subscript $O$ in Figure \ref{fig:4capsbase}) must connect to incoming edges (those with subscript $I$ in Figure \ref{fig:4capsbase}) we find $9$ cases for how the free ends may be connected by upward oriented arcs.  Denote a connection between two endpoints with a colon.

\begin{center}
\begin{enumerate}
 \item $G_O : B_I$, $Y_O : R_I$, $B_O : G_I$, $R_O : Y_I$
 \item $G_O : B_I$, $Y_O : R_I$, $R_O : G_I$, $B_O : Y_I$
 
 \item $G_O : B_I$, $B_O : R_I$, $Y_O : G_I$, $R_O : Y_I$
 \item $Y_O : B_I$, $G_O : R_I$, $B_O : G_I$, $R_O : Y_I$
 \item $Y_O : B_I$, $G_O : R_I$, $R_O : G_I$, $B_O : Y_I$
 \item $Y_O : B_I$, $B_O : R_I$, $R_O : G_I$, $G_O : Y_I$
 
 \item $R_O : B_I$, $G_O : R_I$, $Y_O : G_I$, $B_O : Y_I$
 \item $R_O : B_I$, $Y_O : R_I$, $B_O : G_I$, $G_O : Y_I$
 \item $R_O : B_I$, $B_O : R_I$, $Y_O : G_I$, $G_O : Y_I$ 
\end{enumerate}
\end{center}

Of these $9$ cases only cases $2$, $3$, $4$, $6$, $7$, and $8$ represent knots.  The remaining cases have more than one component.  Cases $3$, $6$, $7$, and $8$ are all handled in the same way.  We will show the result in Case $3$.  Each arc has one end directed upward.  By choosing one of the arcs in Figure \ref{fig:4capsbase} and following the upward end, we connect it with one of the other three arcs in Figure \ref{fig:4capsbase}.  Then, the other pair of arcs in Figure \ref{fig:4capsbase} must be connected by an upward arc.  Therefore to construct $K_{min}$ in this case we must choose one of the configurations shown in Figure \ref{fig:4capsGB} and pair it with one of the configurations shown in Figure \ref{fig:4capsRY}.  We outline the arguments for each pairing below, and summarize the results in Table 1.

\begin{center}
\begin{figure}[h]
\centering
\includegraphics[scale=.7]{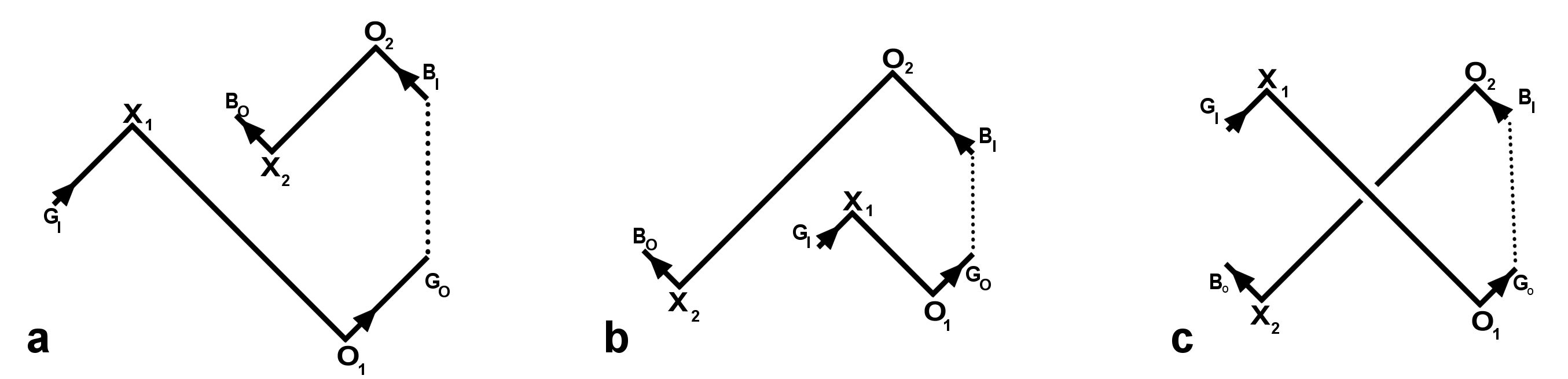}
\caption{Possible configurations of the $G$ and $B$ arcs for Cases $2$ and $3$.}
\label{fig:4capsGB}
\end{figure}
\end{center}

\begin{center}
\begin{figure}[h]
\centering
\includegraphics[scale=.7]{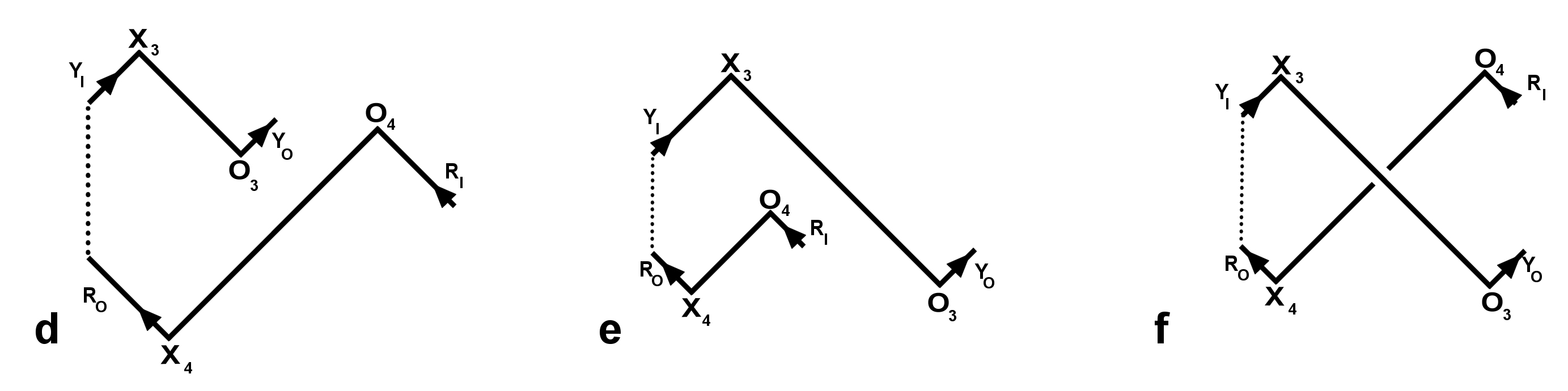}
\caption{Possible configurations of the $R$ and $Y$ arcs for Case $3$.}
\label{fig:4capsRY}
\end{figure}
\end{center}

\medskip

\noindent {\bf ad:} Since $B_O$ connects to $R_I$ via upward cusps $O_4$ must lie above $X_2$.  Either the segment connecting $X_1$ to $O_1$, denoted $XO_1$, crosses the segment connecting $X_4$ to $O_4$, denoted $XO_4$, or not. If $XO_1$ and $XO_4$ do not cross (as in Figure \ref{fig:4capsad}) then $X_1$ cannot lie above $O_3$, but this is required to connect $Y_O$ to $G_I$.  If $XO_1$ and $XO_4$ do cross (as in Figure \ref{fig:4capsad2}) then we cannot construct $T_{(p,2)}$.

\medskip

\begin{center}
\begin{figure}[h]
\centering
\includegraphics[scale=.7]{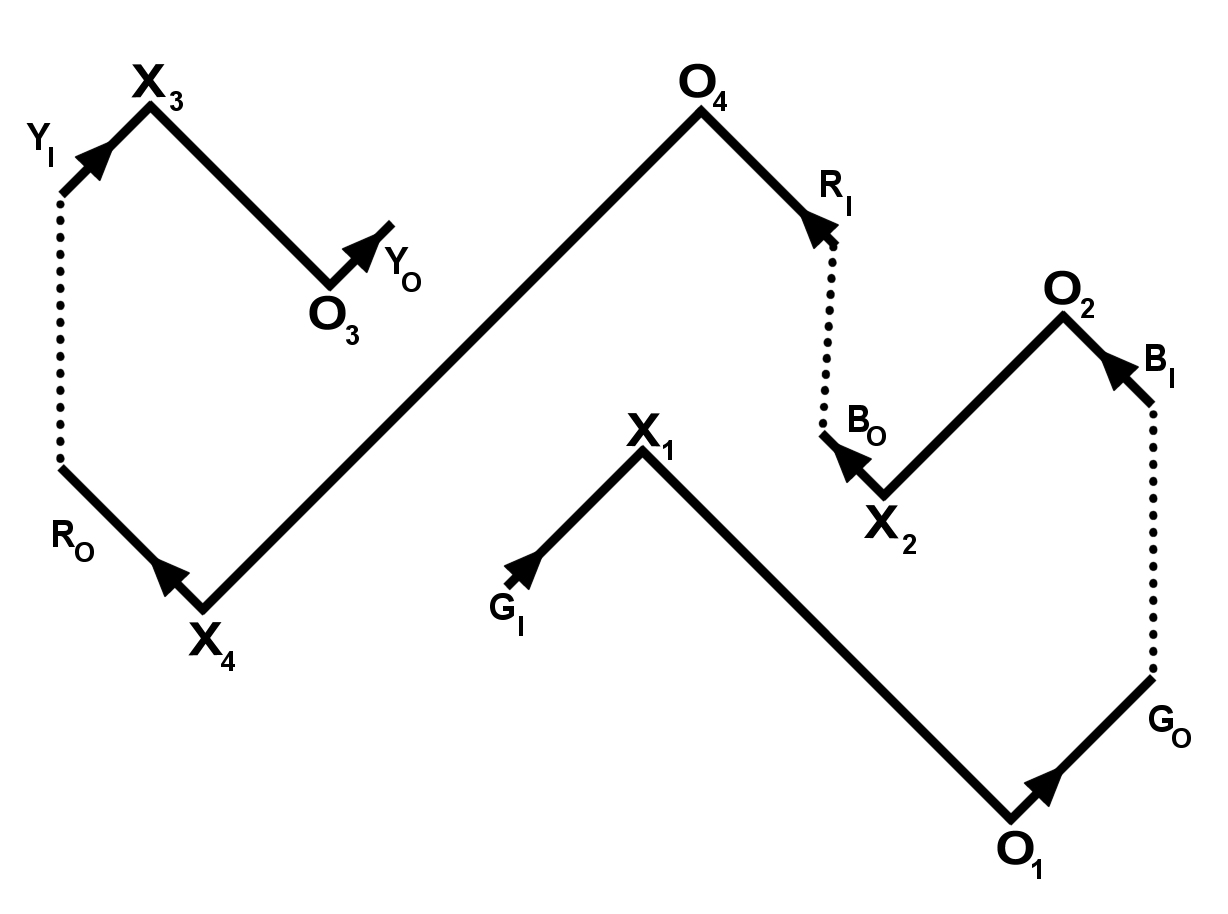}
\caption{Case $ad$ where $XO_1$ and $XO_4$ do not cross.}
\label{fig:4capsad}
\end{figure}
\end{center}

\begin{center}
\begin{figure}[h]
\centering
\includegraphics[scale=.7]{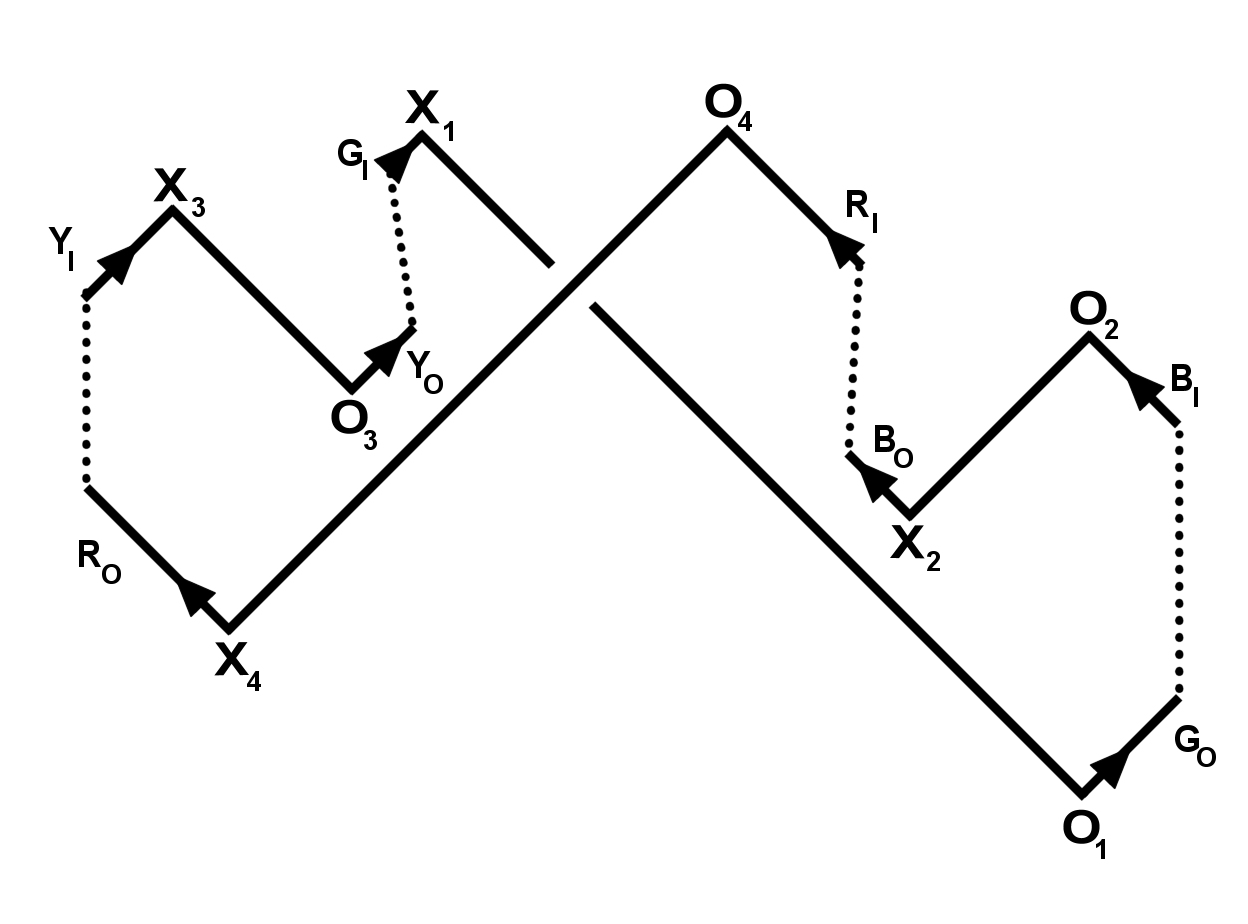}
\caption{Case $ad$ where $XO_1$ and $XO_4$ do cross.}
\label{fig:4capsad2}
\end{figure}
\end{center}

\noindent {\bf ae and bd:}  Since in either case $Y_O$ connects to $G_I$ via upward cusps $O_3$ must lie below $X_1$.  This ensures that $X_2$ is not below $O_4$, hence $R_I$ cannot connect to $B_O$.

\medskip

\noindent {\bf af:}  Since $B_O$ connects to $R_I$ via upward cusps, $O_4$ must lie above $X_2$.  A similar argument shows that $O_3$ must lie below $X_1$.  The resulting diagram cannot be arranged so as to represent a minimal $T_{(p,2)}$.  

\medskip

\noindent {\bf be:}  Since $Y_O$ connects to $G_I$ via upward cusps $X_21$ must lie above $O_3$.  Since $O_3 \preceq X_1 \preceq O_2$ and $O_4 \preceq X_3$ in order for $R_I$ to connect with $B_O$ via upward cusps it must be that $X_2 \preceq O_4$ and hence segments $XO_3$ and $XO_1$ must cross, and we cannot construct $T_{(p,2)}$.  

\medskip

\noindent {\bf bf, cd and ce:}  Since $B_O$ connects to $R_I$ via upward cusps $O_4$ must lie above $X_2$.  Also, since $Y_O$ connects to $G_I$ via upward cusps, $X_1$ must lie above $O_3$.  The resulting diagram cannot be arranged so as to represent a minimal $T_{(p,2)}$.  

\medskip

\noindent {\bf cf:}  Since $B_O$ connects to $R_I$ via upward cusps $O_4$ must lie above $X_2$.  Similarly, since $Y_O$ connects to $G_I$ via upward cusp, $X_1$ must lie above $O_3$.  It is possible to construct $T_{(p,2)}$ as shown in Figure \ref{fig:4capscase2}, but it cannot be minimal.  

\begin{center}
\begin{figure}[h]
\centering
\includegraphics[scale=.7]{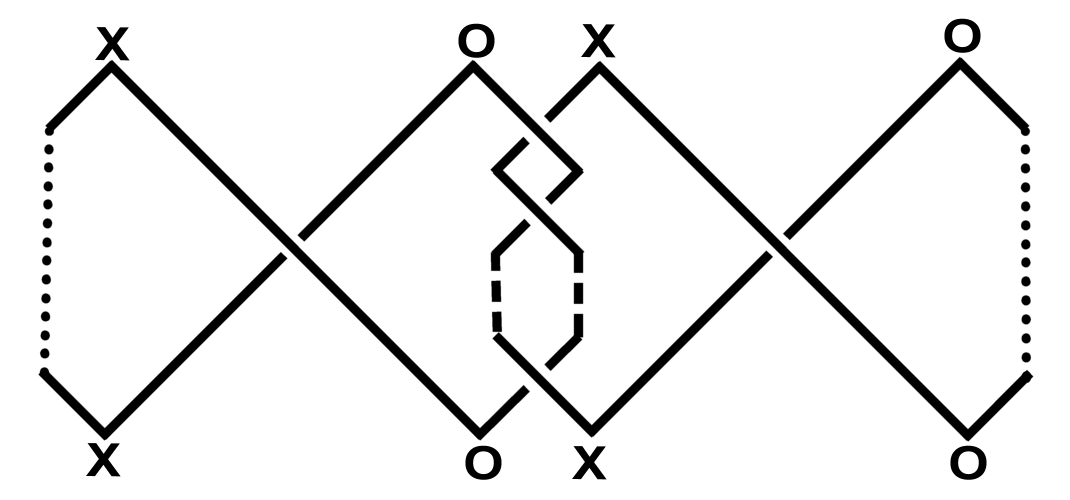}
\caption{$\omega(G) = -p - 2$.}
\label{fig:4capscase2}
\end{figure}
\end{center}

\medskip

\begin{center}
\begin{table}[h]
\label{caseList}
\caption{}
\begin{tabular}{|c|l|}
\hline
Case & Reason \\
\hline
$ad$ & Cannot produce the right knot type. \\
\hline
$ae$ & Configurations cannot be closed up to produce a knot. \\
\hline
$af$ & Cannot produce the right knot type. \\
\hline
$bd$ & Configurations cannot be closed up to produce a knot. \\
\hline
$be$ & Cannot produce the right knot type.\\
\hline
$bf$ & Cannot produce the right knot type. \\
\hline
$cd$ & Cannot produce the right knot type. \\
\hline
$ce$ & Cannot produce the right knot type. \\
\hline
$cf$ & Produces a non-minimal diagram. \\
\hline
\end{tabular}
\end{table}
\end{center}

\medskip

For Case $2$ (Case $4$ is similar) we choose one of the three configurations shown in Figure \ref{fig:4capsGB} and one of the three shown in Figure \ref{fig:4capsRY2}.  Because the configurations in Figures \ref{fig:4capsGB} and \ref{fig:4capsRY2} are the same up to labelling we can reduce the number of cases considered (e.g. Case $ah$ is the same as $bg$).  Table $2$ summarizes the results.

\begin{center}
\begin{figure}[h]
\centering
\includegraphics[scale=.7]{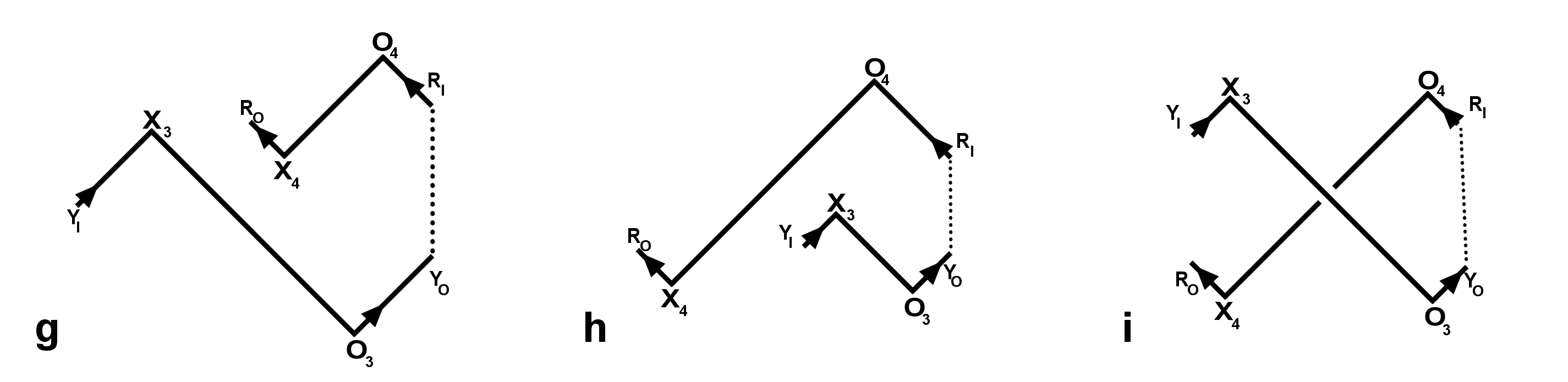}
\caption{Possible configurations of the $R$ and $Y$ arcs for Case $2$.}
\label{fig:4capsRY2}
\end{figure}
\end{center}

\medskip

\noindent {\bf ag:}  Since $R_O$ connects to $G_I$ via upward cusps $X_1$ must lie above $X_4$.  However, this means that there is no way for $X_3$ to be placed above $X_2$, which is necessary in order for $B_O$ to connect with $Y_I$ via upward cusps.  

\medskip

\noindent {\bf ah:}  Since $B_O$ connects $Y_I$ via upward cusps $X_3$ must lie above $X_2$.  Similarly, since $R_O$ connects to $G_I$ via upward cusps $X_1$ must lie above $X_4$.  These two conditions force segments $XO_4$ and $XO_1$ to cross.  In this configuration, it is not possible to produce a minimal diagram for $T_{(p,2)}$.  

\medskip

\noindent {\bf ai and bi:}  As above, $X_1$ must lie above $X_4$ and $X_3$ must lie above $X_2$.  While it is possible to form a $(p,2)$ torus knot from this configuration, it will not be minimal since all twisting must occur on the right-most upward arcs.

\medskip

\noindent {\bf bh:}  Since $R_O$ connects to $G_I$ via upward cusps, $X_1$ must lie above $X_4$.  This means that $X_3$ cannot lie above $X_2$.  However, $X_3$ must lie above $X_2$ if $B_O$ is to connect to $Y_I$ via upward cusps. 

\medskip

\noindent {\bf ci:}  An example of this case is shown in Figure \ref{fig:4capscase1}.  Note that following a similar argument as was given in the 2 maxima/minima case (c.f. Figure \ref{fig:2capscomparison2}) we find that the maxima and minima in this case must be nested as shown in Figure \ref{fig:4capscase1}.  To form $K_{min}$ it is necessary for the pair of arcs on the left to twist $i$ times, and the pair of arcs on the right to twist $j$ times, where exactly one of $i,j$ is odd and the other is even, lest there be two components.  Furthermore, $i + j$ must be at least $p - 2$.  The $i$ half-twists will require $2i$ bends, while the $j$ half-twists will require $2j$ bends.  Thus the total number of bends required will be $2i + 2j = 2(p - 2) = 2p - 4$.  Since at least one of $i,j$ is greater than $1$ (because $p \geq 5$), at least one of the pairs of twisted arcs must introduce a Type $1$ configuration.  

\medskip

Table $2$ summarizes the above results.  Note that completing each construction will lead to the wrong knot type or a non-minimal diagram, unless we choose case $ci$, in which case, at least one Type $1$ configuration will be present.

\begin{center}
\begin{table}[h]
\caption{}
\label{caseList2}
\begin{tabular}{|c|l|}
\hline
Case & Reason \\
\hline
$ag$ & Configurations cannot be closed up to produce a knot. \\
\hline
$ah$ & Cannot produce the right knot type. \\
\hline
$ai$ & Produces a non-minimal diagram.  \\
\hline
$bh$ & Configurations cannot be closed up to produce a knot. \\
\hline
$bi$ & Cannot produce the right knot type. \\
\hline
$ci$ & See above. \\
\hline
\end{tabular}
\end{table}
\end{center}

\begin{center}
\begin{figure}[h]
\centering
\includegraphics[scale=.7]{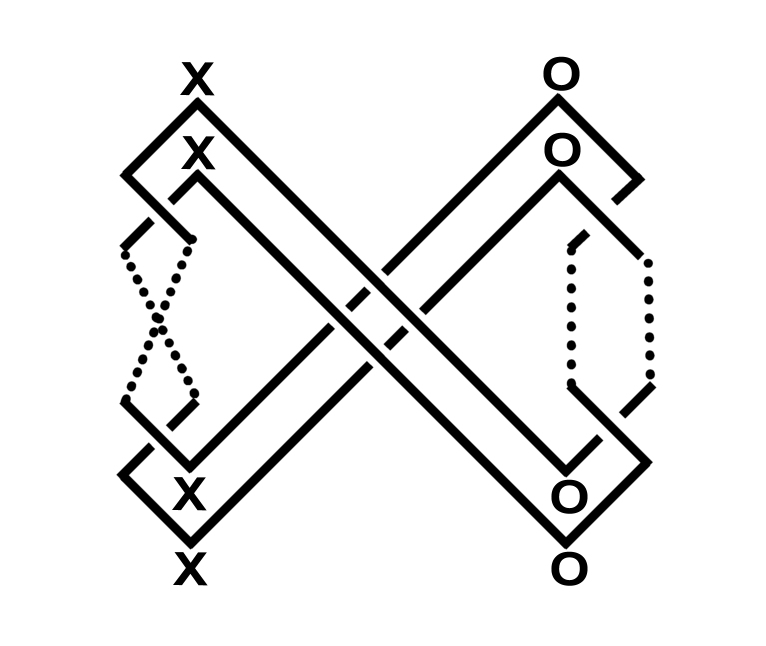}
\caption{$\omega(G) = -p - 2$.}
\label{fig:4capscase1}
\end{figure}
\end{center}

Finally, since all grid diagrams for $K_{min}$ fail to lift due to the appearance of Type 1 configurations, $c_\ell(K_{min}) > p + 2$.

\end{proof}

In the preceding theorem it was required that $p \geq 5$.  For $p = 3$ (the trefoil) Legendrian cube number does not distinguish between $K_{min}$ and $K_{max}$.  The above proof fails for the $p = 3$ case because introducing a single half-twist for the pair of arcs in Case $3$ ($3$ maxima and $3$ minima) is sufficient to build the trefoil, thus avoiding the introduction of Type 1 configurations.

\end{document}